\begin{document}
\title[\hfilneg  \hfil ]{Quasi-parabolic Composition Operators on Weighted Bergman Spaces}

\author[u. g\"{u}l]{u\u{g}ur g\"{u}l}

\address{u\u{g}ur g\"{u}l,  \newline
Hacettepe University, Department of Mathematics, 06800, Beytepe,
Ankara, TURKEY}
\email{\href{mailto:gulugur@gmail.com}{gulugur@gmail.com}}


\thanks{Submitted 18 April 2015}

\subjclass[2000]{47B33} \keywords{Composition Operators, Bergman
Spaces, Essential Spectra.}

\begin{abstract}
    In this work we study the essential spectra of composition operators
    on weighted Bergman spaces of analytic functions which might be termed as
    ``quasi-parabolic.'' This is the class of composition operators on
    $A_{\alpha}^{2}$ with symbols whose conjugate with the Cayley transform on
    the upper half-plane are of the form $\varphi(z)=$ $z+\psi(z)$, where
    $\psi\in$ $H^{\infty}(\mathbb{H})$ and $\Im(\psi(z)) > \epsilon > 0$.
    We especially examine the case where $\psi$ is discontinuous at
    infinity. A new method is devised to show that this type of
    composition operators fall in a C*-algebra of Toeplitz operators and
    Fourier multipliers. This method enables us to provide new examples
    of essentially normal composition operators and to calculate their
    essential spectra.
\end{abstract}

\maketitle
\newtheorem{theorem}{Theorem}
\newtheorem{acknowledgement}[theorem]{Acknowledgement}
\newtheorem{algorithm}[theorem]{Algorithm}
\newtheorem{axiom}[theorem]{Axiom}
\newtheorem{case}[theorem]{Case}
\newtheorem{claim}[theorem]{Claim}
\newtheorem{conclusion}[theorem]{Conclusion}
\newtheorem{condition}[theorem]{Condition}
\newtheorem{conjecture}[theorem]{Conjecture}
\newtheorem{corollary}[theorem]{Corollary}
\newtheorem{criterion}[theorem]{Criterion}
\newtheorem{definition}[theorem]{Definition}
\newtheorem{example}[theorem]{Example}
\newtheorem{exercise}[theorem]{Exercise}
\newtheorem{lemma}[theorem]{Lemma}
\newtheorem{notation}[theorem]{Notation}
\newtheorem{problem}[theorem]{Problem}
\newtheorem{proposition}[theorem]{Proposition}
\newtheorem{remark}[theorem]{Remark}
\newtheorem{solution}[theorem]{Solution}
\newtheorem{summary}[theorem]{Summary}
\newtheorem*{thma}{Theorem A}
\newtheorem*{thmb}{Theorem B}
\newtheorem*{thmc}{Theorem C}
\newtheorem*{thmd}{Theorem D}
\newtheorem*{Paley-WienerHardy}{Paley-Wiener Theorem}
\newtheorem*{Paley-Wiener}{Paley-Wiener Theorem for Weighted Bergman Spaces}
\newtheorem*{Power}{Power's Theorem}
\newtheorem*{mthm1}{Main Theorem 1}
\newtheorem*{mthm2}{Main Theorem 2}
\newcommand{\norm}[1]{\left\Vert#1\right\Vert}
\newcommand{\abs}[1]{\left\vert#1\right\vert}
\newcommand{\set}[1]{\left\{#1\right\}}
\newcommand{\Real}{\mathbb R}
\newcommand{\eps}{\varepsilon}
\newcommand{\To}{\longrightarrow}
\newcommand{\BX}{\mathbf{B}(X)}
\newcommand{\A}{\mathcal{A}}

\section{introduction}
This paper is a continuation of our work \cite{Gul} on
quasi-parabolic composition operators on the Hardy space $H^{2}$.
In this work we investigate the same class of operators on
weighted Bergman spaces $A^{2}_{\alpha}(\mathbb{D})$.

Quasi-parabolic composition operators is a generalization of the
composition operators induced by parabolic linear fractional
non-automorphisms of the unit disc that fix a point $\xi$ on the
boundary. These linear fractional transformations for $\xi= 1$
take
  the form
  \[\varphi_{a}(z)=\frac{2iz+a(1-z)}{2i+a(1-z)}\]
  with $\Im(a) > 0$. Quasi-parabolic composition operators on $H^{2}(\mathbb{D})$
  are composition operators induced by the symbols where `$a$' is replaced by a bounded
  analytic function `$\psi$' for which $\Im(\psi(z))>\delta>0$\quad
  $\forall z\in\mathbb{D}$. We recall that the local essential range $\mathcal{R}_{\xi}(\psi^{\ast})$ of $\psi\in
  H^{\infty}(\mathbb{D})$ at $\xi\in\mathbb{T}$ is defined to be the set of points
  $\zeta\in$ $\mathbb{C}$ for which the set $\{z\in\mathbb{T}:\mid\psi^{\ast}(z)-\zeta\mid<\varepsilon\}\cap
  S_{\xi,r}$ has positive Lebesgue measure $\forall\varepsilon>0$ and $\forall r>0$ where $S_{\xi,r}=\{z\in\mathbb{T}:\mid z-\xi\mid<r\}$ and
  $\psi^{\ast}\in L^{\infty}(\mathbb{T})$ is the boundary value function of $\psi$. In \cite{Gul} we showed that if $\psi\in
  QC(\mathbb{T})\cap H^{\infty}(\mathbb{D})$ then these composition operators are essentially
  normal and their essential spectra are given as
  \[\sigma_{e}(C_{\varphi})=\{e^{izt}:t\in
  [0,\infty],z\in\mathcal{R}_{1}(\psi^{\ast})\}\cup\{0\}\]
where\quad $\mathcal{R}_{1}(\psi^{\ast})$ is the local essential
range of $\psi$ at $1$.

In the weighted Bergman space setting $QC$ is replaced by
$VMO_{\partial}$. The class of ``Vanishing Mean Oscillation near
the Boundary" functions is defined as the set of functions $f\in
 L^{1}(\mathbb{D},dA)$ satisfying
 $$\lim_{\mid z\mid\rightarrow 1^{-}}\frac{1}{\mid Q_{z}\mid}\int_{Q_{z}}\mid
 f(w)-\frac{1}{\mid Q_{z}\mid}\int_{Q_{z}}f(u)dA(u)\mid dA(w)=0$$
 where $Q_{z}=\{w\in\mathbb{D}:\mid w\mid\geq\mid z\mid,\mid\arg
 w-\arg z\mid\leq 1-\mid z\mid\}$ and $\mid Q_{z}\mid=(1+\mid
 z\mid)(1-\mid z\mid)^{2}$ is the $dA$ measure of $Q_{z}$. We have the following very
similar result in the weighted Bergman space setting:
\begin{mthm1}
Let $\varphi:\mathbb{D}\rightarrow$ $\mathbb{D}$ be an analytic
self-map of $\mathbb{D}$ such that
\begin{equation*}
\varphi(z)=\frac{2iz+\eta(z)(1-z)}{2i+\eta(z)(1-z)}
\end{equation*}
where $\eta\in$ $H^{\infty}(\mathbb{D})$ with $\Im(\eta(z)) >
\epsilon> 0$ for all $z\in$ $\mathbb{D}$. If $\eta\in$
$VMO_{\partial}(\mathbb{D})\cap H^{\infty}$ then we have
\begin{itemize}
\item (i)\quad $C_{\varphi}:$ $A^{2}_{\alpha}(\mathbb{D})\rightarrow$
$A^{2}_{\alpha}(\mathbb{D})$ is essentially normal
\item (ii)\quad$\sigma_{e}(C_{\varphi})=\{e^{izt}:t\in [0,\infty],
z\in\mathcal{R}_{1}(\eta^{\ast})\}\cup\{0\}$
\end{itemize}
where\quad $\mathcal{R}_{1}(\eta^{\ast})$ is the local essential
range of $\eta^{\ast}\in L^{\infty}(\mathbb{T})$ at $1$ and
$\eta^{\ast}$ is the boundary limit value function of $\eta$.
\end{mthm1}
In the upper half-plane for $\psi\in VMO_{\partial}\cap
H^{\infty}(\mathbb{H})$ the local essential range
$\mathcal{R}_{\infty}(\psi)$ of $\psi$ at $\infty$ is defined to
be the set of points $z\in$ $\mathbb{C}$ so that, for all
$\varepsilon > 0$ and $n > 0$, we have
\begin{equation*}
    \lambda((\psi^{\ast})^{-1}(B(z,\varepsilon))\cap(\mathbb{R}-[-n,n]))>0 ,
\end{equation*}
where $\lambda$ is the Lebesgue measure on $\mathbb{R}$ and
$\psi^{\ast}$ is the boundary value function of $\psi$. We have
the following result for the upper half-plane case:
\begin{mthm2}
Let $\psi\in$ $VMO_{\partial}(\mathbb{H})\cap
H^{\infty}(\mathbb{H})$ such that $\Im(\psi(z)) > \epsilon > 0$
for all $z\in$ $\mathbb{H}$ then for $\varphi(z)=$ $z+\psi(z)$ and
$\alpha>-1$ we have
\begin{itemize}
\item (i)\quad $C_{\varphi}:$ $A^{2}_{\alpha}(\mathbb{H})\rightarrow$
$A^{2}_{\alpha}(\mathbb{H})$ is essentially normal \\
\item (ii)\quad$\sigma_{e}(C_{\varphi})=\{e^{izt}:t\in [0,\infty],
z\in\mathcal{R}_{\infty}(\psi^{\ast})\}\cup\{0\}$
\end{itemize}
where\quad $\mathcal{R}_{\infty}(\psi^{\ast})$ is the local
essential range of $\psi^{\ast}\in L^{\infty}(\mathbb{R})$ at
$\infty$ and $\psi^{\ast}$ is the boundary limit value $\psi$.
\end{mthm2}

\section{Notation and Preliminaries}

In this section we fix the notation that we will use throughout
and recall some preliminary facts that will be used in the sequel.

Let $S$ be a compact Hausdorff topological space. The space of all
complex valued continuous functions on $S$ will be denoted by
$C(S)$. For any $f\in C(S)$, $\parallel f\parallel_{\infty}$ will
denote the sup-norm of $f$, i.e. $$\parallel
f\parallel_{\infty}=\sup\{\mid f(s)\mid:s\in S\}.$$ For a Banach
space $X$, $K(X)$ will denote the space of all compact operators
on $X$ and $\mathcal{B}(X)$  will denote the space of all bounded
linear operators on $X$. The open unit disc will be denoted by
$\mathbb{D}$, the open upper half-plane will be denoted by
$\mathbb{H}$, the real line will be denoted by $\mathbb{R}$ and
the complex plane will be denoted by $\mathbb{C}$. The one point
compactification of $\mathbb{R}$ will be denoted by
$\dot{\mathbb{R}}$ which is homeomorphic to $\mathbb{T}$. For any
$z\in$ $\mathbb{C}$, $\Re(z)$ will denote the real part, and
$\Im(z)$ will denote the imaginary part of $z$, respectively. For
any subset $S\subset$ $\mathcal{B}(H)$, where $H$ is a Hilbert
space, the C*-algebra generated by $S$ will be denoted by
$C^{*}(S)$. The Cayley transform $\mathfrak{C}$ will be defined by
\begin{equation*}
\mathfrak{C}(z)=\frac{z-i}{z+i}.
\end{equation*}
 For any $a\in$ $L^{\infty}(\mathbb{H})$ (or $a\in$
$L^{\infty}(\mathbb{D})$), $M_{a}$ will be the multiplication
operator on $L^{2}(\mathbb{H})$ (or $L^{2}(\mathbb{D})$) defined
as
\begin{equation*}
M_{a}(f)(x)=a(x)f(x).
\end{equation*}
For convenience, we remind the reader of the rudiments of Gelfand
theory of commutative Banach algebras and Toeplitz operators.

Let $A$ be a commutative Banach algebra. Then its maximal ideal
space $M(A)$ is defined as
\begin{equation*}
    M(A)=\{x\in A^{*}:x(ab)=x(a)x(b)\quad\forall a,b\in A\}
\end{equation*}
where $A^{*}$ is the dual space of $A$. If $A$ has identity then
$M(A)$ is a compact Hausdorff topological space with the weak*
topology. The Gelfand transform $\Gamma:A\rightarrow C(M(A))$ is
defined as
\begin{equation*}
    \Gamma(a)(x)=x(a).
\end{equation*}
 If $A$ is a commutative C*-algebra with
identity, then $\Gamma$ is an isometric *-isomorphism between $A$
and $C(M(A))$. If $A$ is a C*-algebra and $I$ is a two-sided
closed ideal of $A$, then the quotient algebra $A/I$ is also a
C*-algebra (see \cite{Arv} and \cite{Rud}).
 For $a\in A$ the spectrum $\sigma_{A}(a)$ of $a$ on $A$
is defined as
\begin{equation*}
    \sigma_{A}(a)=\{\lambda\in\mathbb{C}:\lambda e-a\ \ \textrm{is not invertible in}\ A\},
\end{equation*}
where $e$ is the identity of $A$. We will use the spectral
permanency property of C*-algebras (see \cite{Rud}, pp. 283); i.e.
if $A$ is a C*-algebra with identity and $B$ is a closed
*-subalgebra of $A$, then for any $b\in B$ we have
\begin{equation}
\sigma_{B}(b)=\sigma_{A}(b).
\end{equation}
To compute essential spectra we employ the following important
fact (see \cite{Rud}, pp. 268): If $A$ is a commutative Banach
algebra with identity then for any $a\in A$ we have
\begin{equation}
    \sigma_{A}(a)=\{\Gamma(a)(x)=x(a):x\in M(A)\}.
\end{equation}
In general (for $A$ not necessarily commutative), we have
\begin{equation}
    \sigma_{A}(a)\supseteq\{x(a):x\in M(A)\}.
\end{equation}

For a Banach algebra $A$, we denote by $com(A)$ the closed ideal
in $A$ generated by the commutators
$\{a_{1}a_{2}-a_{2}a_{1}:a_{1},a_{2}\in A\}$. It is an algebraic
fact that the quotient algebra $A/com(A)$ is a commutative Banach
algebra. The reader can find detailed information about Banach and
C*-algebras in \cite{Rud} related to what we have reviewed so far.

The essential spectrum $\sigma_{e}(T)$ of an operator $T$ acting
on a Banach
  space $X$ is the spectrum of the coset of $T$ in the Calkin algebra
  $\mathcal{B}(X)/K(X)$, the algebra of bounded linear operators modulo
  compact operators. The well known Atkinson's theorem identifies the essential
  spectrum of $T$ as the set of all $\lambda\in$ $\mathbb{C}$ for
  which $\lambda I-T$ is not a Fredholm operator. The essential norm of $T$ will be denoted by $\parallel T\parallel_{e}$ which is defined as
\begin{equation*}
 \parallel T\parallel_{e}=\inf\{\parallel T+K\parallel:K\in K(X)\}
\end{equation*}
   The bracket $[\cdot]$ will denote the equivalence class modulo
  $K(X)$. An operator $T\in\mathcal{B}(H)$ is called essentially
  normal if $T^{*}T-TT^{*}\in K(H)$ where $H$ is a Hilbert space and
  $T^{*}$ denotes the Hilbert space adjoint of $T$.

For $\alpha$ $>-1$ the weighted Bergman space
 $A_{\alpha}^{2}(\mathbb{H})$ of the upper half-plane is defined as
 $$A_{\alpha}^{2}(\mathbb{H}) = \{
 f:\mathbb{H}\rightarrow\mathbb{C}:\textrm{f is analytic and}\qquad\int_{\mathbb{H}}\mid
 f(x+iy)\mid^{2}y^{\alpha}dxdy <\infty \}$$
The weighted Bergman spaces $A^{2}_{\alpha}$ are reproducing
kernel Hilbert
 spaces with kernel functions $$k_{w}(z)= \frac{1}{(\bar{w}-z)^{\alpha +2}}$$
 (see \cite{Du-Sch}).For $H^{2}(\mathbb{D})$, the Hardy space of the unit
 disc it is quite an obvious fact that if $f\in$
 $L^{2}(\mathbb{T})$,\quad $f(z)$ $= \sum^{\infty}_{-\infty}
 \hat{f}(n)z^{n}$ then $$ f\in H^{2} \Leftrightarrow \hat{f}(n) =
 0\qquad \forall n < 0 $$ i.e. $f\in$ $H^{2}$ if and only if its
 negative Fourier coefficients are zero.

 A similar fact arises for $H^{2}(\mathbb{H})$ as the Paley-Wiener
 theorem:\\
\begin{Paley-WienerHardy}
  T.F.A.E\\
a. $F\in$ $H^{2}(\mathbb{H})$\\
b. $\exists f\in$ $L^{2}(\mathbb{R}^{+})$ s.t. $F(z)=$
 $\int_{0}^{\infty}f(t)e^{2\pi itz}dt$\quad $z\in$ $\mathbb{H}$
\end{Paley-WienerHardy}
Moreover the correspondence $F$ $\rightarrow$ $f$ is an isometric
 isomorphism of $H^{2}(\mathbb{H})$ onto $L^{2}(\mathbb{R}^{+})$.
 For weighted Bergman spaces $A^{2}_{\alpha}(\mathbb{H})$ the Paley
 Wiener theorem as proved by P. Duren, E. Gallardo-Gutierrez and A.
 Montes-Rodriguez(see \cite{Du-GG-MR}) takes the following form:
\begin{Paley-Wiener}
T.F.A.E\\
(a). $F\in$ $A^{2}_{\alpha}(\mathbb{H})$\\
(b). $\exists f\in$ $L^{2}_{\alpha +1}(\mathbb{R}^{+})$ s.t.
$F(z)=$ $\int_{0}^{\infty}f(t)e^{2\pi itz}dt$\quad $z\in$
$\mathbb{H}$\\
where
$$L^{2}_{\beta}(\mathbb{R}^{+})=\{f:\mathbb{R}^{+}\rightarrow\mathbb{C}:\int_{0}^{\infty}\mid
 f(t)\mid^{2}t^{-\beta}dt<\infty\}$$ The weighted space
 $L^{2}_{\beta}(\mathbb{R}^{+})$ is equipped with the norm
 $$(\parallel f\parallel_{L^{2}_{\beta}})^{2} =
 \frac{\Gamma(\beta)}{2^{\beta}}\int_{0}^{\infty}\mid
 f(t)\mid^{2}t^{-\beta}dt$$ The correspondence $F$ $\rightarrow$ $f$
 is again an isometric isomorphism of $A^{2}_{\alpha}(\mathbb{H})$
 onto $L^{2}_{\beta}(\mathbb{R}^{+})$. It is easily seen that this
 correspondence is the well-known Fourier transform.
\end{Paley-Wiener}

Since the map $\mathcal{F}^{-1}:L^{2}_{\alpha
+1}(\mathbb{R}^{+})\rightarrow A^{2}_{\alpha}(\mathbb{H})$,
$$\mathcal{F}^{-1}(f)(z)=\int_{0}^{\infty}f(t)e^{2\pi itz}dt$$ is
an isometric isomorphism of Hilbert spaces, it is unitary and
hence we have
\begin{eqnarray*}
& &\langle\mathcal{F}^{-1}(f),g\rangle_{A^{2}_{\alpha}(\mathbb{H})}=\int_{\mathbb{H}}\mathcal{F}^{-1}(f)(z)\overline{g(z)}dA_{\alpha}(z)=\int_{\mathbb{H}}(\int_{0}^{\infty}f(t)e^{2\pi itz}dt)\overline{g(z)}dA_{\alpha}(z)=\\
& &\langle f,\mathcal{F}(g)\rangle_{L^{2}_{\alpha
+1}(\mathbb{R}^{+})}.
\end{eqnarray*}
Using Fubini-Tonelli theorem we have
\begin{eqnarray*}
& &\int_{\mathbb{H}}(\int_{0}^{\infty}f(t)e^{2\pi itz}dt)\overline{g(z)}dA_{\alpha}(z)=\int_{0}^{\infty}f(t)(\int_{\mathbb{H}}t^{1+\alpha}e^{2\pi itz}\overline{g(z)}dA_{\alpha}(z))\frac{dt}{t^{1+\alpha}}\\
& &=\langle f,\mathcal{F}(g)\rangle_{L^{2}_{\alpha +1}(\mathbb{R}^{+})}=\frac{\Gamma(\alpha +1)}{2^{\alpha +1}}\int_{0}^{\infty}f(t)\overline{\mathcal{F}(g)(t)}\frac{dt}{t^{1+\alpha}}\\
\end{eqnarray*}
Hence we have the representation of
$\mathcal{F}:A^{2}_{\alpha}(\mathbb{H})\rightarrow L^{2}_{\alpha
+1}(\mathbb{R}^{+})$ as follows:
$$\mathcal{F}(g)(t)=\frac{2^{\alpha +1}t^{1+\alpha}}{\Gamma(\alpha +1)}\int_{\mathbb{H}}e^{-2\pi it\bar{z}}g(z)dA_{\alpha}(z)$$

 By the help of this fact one can distinguish a class of C*
algebras
 of operators acting on $A^{2}_{\alpha}(\mathbb{H})$. For $X$ being a C* algebra of
 functions of $\mathbb{R}^{+}$ s.t. $X\subseteq$
 $L^{\infty}(\mathbb{R}^{+})$ the Fourier multiplier algebra on $A^{2}_{\alpha}$ associated to $X$ is
 defined to be $$ F_{X}^{A^{2}_{\alpha}} = \{T:A^{2}_{\alpha}\rightarrow
A^{2}_{\alpha}:\mathcal{F}T\mathcal{F}^{-1} = M_{\phi}\quad\phi\in
X\}$$
 where $M_{\phi}f(x)=$ $\phi(x)f(x)$ is the multiplication
 operator by $\phi$ and $\mathcal{F}$ is the Fourier transform. Throughout we will be concerned with the case $X=$
 $C_{0}([0,\infty))$ the C* algebra of continuous functions
 vanishing at infinity. An example of a Fourier multiplier is the
 translation by $\gamma\in$ $\mathbb{H}$ i.e.  $T:A^{2}_{\alpha}\rightarrow$ $A^{2}_{\alpha}$, $Tf(z)=$
 $f(z+\gamma)$. One easily sees that $\mathcal{F}T\mathcal{F}^{-1}=$
 $M_{\phi}$ where $\phi(t)=$ $e^{2\pi i\gamma t}$. For any $\phi\in
 C([0,\infty])$ $D_{\phi}:A^{2}_{\alpha}(\mathbb{H})\rightarrow
 A^{2}_{\alpha}(\mathbb{H})$ will denote the Fourier multiplier
 defined as $$D_{\phi}=\mathcal{F}^{-1}M_{\phi}\mathcal{F}$$

 For any $f\in L^{\infty}(\mathbb{D})$, the Toeplitz
 operator $T_{f}:A^{2}_{\alpha}(\mathbb{D})\rightarrow A^{2}_{\alpha}(\mathbb{D})$ with
 symbol $f$ is defined as
 $$T_{f}=P M_{f} $$ where $P:L^{2}(\mathbb{D},dA_{\alpha})\rightarrow
 A^{2}_{\alpha}(\mathbb{D})$ is the orthogonal projection and $M_{f}:L^{2}(\mathbb{D},dA_{\alpha})\rightarrow
 L^{2}(\mathbb{D},dA_{\alpha})$ is the multiplication operator.
 Similarly the Toeplitz operator $T_{f}:A^{2}_{\alpha}(\mathbb{H})\rightarrow A^{2}_{\alpha}(\mathbb{H})$ on
 $A^{2}_{\alpha}(\mathbb{H})$ is defined as
 $T_{f}=PM_{f}$ where
 $P:L^{2}(\mathbb{H},dA_{\alpha})\rightarrow
 A^{2}_{\alpha}(\mathbb{H})$ is the orthogonal projection,
 $M_{f}:L^{2}(\mathbb{H},dA_{\alpha})\rightarrow
 L^{2}(\mathbb{H},dA_{\alpha})$ is the multiplication operator and
 $f\in L^{\infty}(\mathbb{H})$.

 In \cite{Zh} Zhu introduced the space of functions
 $VMO_{\partial}(\mathbb{D})$ of ``vanishing mean oscillation near
 the boundary" which is defined as the set of functions $f\in
 L^{1}(\mathbb{D},dA)$ satisfying
 $$\lim_{\mid z\mid\rightarrow 1^{-}}\frac{1}{\mid Q_{z}\mid}\int_{Q_{z}}\mid
 f(w)-\frac{1}{\mid Q_{z}\mid}\int_{Q_{z}}f(u)dA(u)\mid dA(w)=0$$
 where $Q_{z}=\{w\in\mathbb{D}:\mid w\mid\geq\mid z\mid,\mid\arg
 w-\arg z\mid\leq 1-\mid z\mid\}$ and $\mid Q_{z}\mid=(1+\mid
 z\mid)(1-\mid z\mid)^{2}$ is the $dA$ measure of $Q_{z}$. Zhu
 showed for the case $\alpha=0$ that for $f\in
 L^{\infty}(\mathbb{D})$, the semi-commutator
 $T_{gf}-T_{g}T_ {f}\in K(A^{2})$ is compact on
 $A^{2}$ $\forall g\in L^{\infty}(\mathbb{D})$ if and only if
 $f\in VMO_{\partial}(\mathbb{D})$. Although Zhu proved this
 fact for $\alpha=0$, his proofs work for the general weighted
 case i.e. for $\alpha\neq 0$. Zhu also introduced the space of
 functions $ESV(\mathbb{D})$ of ``eventually slowly varying" which
 is defined as the set of functions $f\in L^{\infty}(\mathbb{D})$
 satisfying for any $\varepsilon>0$ and $\kappa\in (0,1)$ there is
 $\delta_{0}>0$ such that $$\mid f(z)-f(w)\mid<\varepsilon$$
 whenever $\mid z\mid,\mid w\mid\in [1-\delta,1-\kappa\delta]$,
 $\delta<\delta_{0}$ and $\mid\arg z-\arg w\mid\leq\max(1-\mid
 z\mid,1-\mid w\mid)$. Zhu proved that
 $$VMO_{\partial}(\mathbb{D})\cap
 H^{\infty}(\mathbb{D})=ESV(\mathbb{D})\cap
 H^{\infty}(\mathbb{D}).$$

 We finish the Preliminaries section by exhibiting an isometric
 isomorphism between $A^{2}_{\alpha}(\mathbb{D})\longleftrightarrow$
 $A^{2}_{\alpha}(\mathbb{H})$ where $$A_{\alpha}^{2}(\mathbb{D}) = \{
 f:\mathbb{D}\rightarrow\mathbb{C}:\textrm{f is analytic and}\qquad\int_{\mathbb{D}}\mid
 f(z)\mid^{2}(1-\mid z\mid^{2})^{\alpha}dA(z) <\infty \}$$ and
 $\mathbb{D}$ is the unit disc. The map
 $\Phi :A^{2}_{\alpha}(\mathbb{D})\rightarrow$
 $A^{2}_{\alpha}(\mathbb{H})$, $$\Phi(f)(z) = \frac{2^{\alpha +1}}{(z+i)^{\alpha
 +2}}f\bigg(\frac{z-i}{z+i}\bigg)$$ is an isometric isomorphism. It
 is of interest to us what the composition operators become under
 intertwining with this isomorphism, i.e. for
 $\varphi:\mathbb{D}\rightarrow$ $\mathbb{D}$ what is
 $\Phi C_{\varphi}\Phi^{-1}$?: We have the following answer to
 this question: $$\Phi C_{\varphi}\Phi^{-1} =
 M_{\tau^{2}}C_{\tilde{\varphi}}$$ where $M_{\tau^{2}}f(z)=$ $\tau(z)^{2}f(z)$
 is the multiplication operator, $\tau(z)=$
 $\frac{\tilde{\varphi}(z)+i}{z+i}$, $\tilde{\varphi}=$
 $C^{-1}\circ\varphi\circ C$ and $C(z)=$ $\frac{z-i}{z+i}$ is the
 Cayley transform. This gives us the boundedness of
 $C_{\varphi}:A^{2}_{\alpha}(\mathbb{H})$ $\rightarrow$
 $A^{2}_{\alpha}(\mathbb{H})$ for $\varphi(z)=$ $pz+\psi(z)$ where $p>$
 $0$, $\psi\in$ $H^{\infty}$ and the closure of the image
 $\overline{\psi(\mathbb{H})}\subset\subset$ $\mathbb{H}$ is compact in
 $\mathbb{H}$:

 Let $\theta:\mathbb{D}\rightarrow$ $\mathbb{D}$ be an analytic
 self-map of $\mathbb{D}$ such that $\tilde{\theta}=$ $C^{-1}\circ\theta\circ
 C=$ $\varphi$ then we have $\Phi C_{\theta}\Phi^{-1}=$
 $M_{\tau^{2}}C_{\varphi}$ where $\tau(z)=$
 $\frac{\varphi(z)+i}{z+i}$. If $\varphi(z)=$ $pz+\psi(z)$ with $p>$
 $0$, $\psi\in$ $H^{\infty}$ and $\Im(\psi(z))>$ $\delta>$ $0$ then
 $M_{\frac{1}{\tau^{2}}}$ is a bounded
 operator. Since $C_{\theta}$ is always bounded(see \cite{Cow-Mac}), this implies that $\Phi C_{\theta}\Phi^{-1}$
 is also bounded and we conclude
 that $C_{\varphi}$ is bounded on $A^{2}_{\alpha}(\mathbb{H})$(see also
 \cite{Ell-W}). We also observe that for any $f\in
 L^{\infty}(\mathbb{D})$ we have
 $$\Phi\circ T_{f}\circ\Phi^{-1}=T_{f\circ\mathfrak{C}} $$

 \section{Approximation Scheme for Composition Operators
 on Weighted Bergman Spaces of the Upper Half-Plane}

 In this section we develop an approximation scheme for composition
 operators induced by the maps of the form $\phi(z)=$ $pz+\psi(z)$
 where $p>$ $0$ and $\psi\in$ $H^{\infty}$ such that the closure of
 the image $\overline{\psi(\mathbb{H})}\subset\subset$ $\mathbb{H}$
 is compact in $\mathbb{H}$, by linear combinations of $T_{\psi}$
 and Fourier multipliers where $T_{\psi}$ is the Toeplitz operator
 with symbol $\psi$. By the preceding section we know that these
 maps induce bounded composition operators on $A^{2}_{\alpha}(\mathbb{H})$. In establishing
 this approximation scheme our main tools are the integral representation formulas which come from the
 fact that these spaces are reproducing kernel Hilbert spaces. We
 begin with a simple geometric lemma that will be helpful in our
 task:

 \begin{lemma} Let $K\subset$ $\mathbb{H}$ be a compact
 subset of $\mathbb{H}$. Then $\exists$ $\beta\in$ $\mathbb{R}^{+}$
 such that $\sup\{\mid\frac{\beta i-z}{\beta}\mid:z\in K\}<$
 $\delta<$ $1$ for some $\delta\in$ $(0,1)$
 \end{lemma}

 \begin{proof}
 See \cite{Gul}
\end{proof}
  We also need the following lemma which characterizes certain
  integral operators as Fourier multipliers
\begin{lemma}
Let $\alpha\in\mathbb{R}$ s.t. $\alpha>-1$ and
$\beta\in\mathbb{H}$. Then the operator
$M_{n}:A^{2}_{\alpha}(\mathbb{H})\rightarrow
A^{2}_{\alpha}(\mathbb{H})$ defined as
$$(M_{n}f)(z)=\frac{-1}{\pi}\int_{\mathbb{H}}\frac{f(w)dA_{\alpha}(w)}{(\bar{w}-z-\beta)^{n+\alpha+2}}$$
where $n\in\mathbb{N}\setminus\{0\}$, is the Fourier multiplier
$D_{\phi_{n}}$ where
$$\phi_{n}(t)=\frac{(2\pi it)^{n}e^{2\pi i\beta
t}}{(\alpha+2)(\alpha+3)...(\alpha+n+1)}$$ i.e.
$M_{n}=D_{\phi_{n}}$.
\end{lemma}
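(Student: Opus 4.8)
The plan is to compute the Fourier transform of $M_n f$ directly, using the representation of $\mathcal{F}$ derived in the Preliminaries, and to exploit the reproducing-kernel integral formula to push the Fourier transform through the kernel. Concretely, recall that for $g \in A^2_\alpha(\mathbb{H})$ we have
\[
\mathcal{F}(g)(t) = \frac{2^{\alpha+1} t^{1+\alpha}}{\Gamma(\alpha+1)} \int_{\mathbb{H}} e^{-2\pi i t \bar z} g(z)\, dA_\alpha(z).
\]
First I would apply this formula to $g = M_n f$, writing $(M_n f)(z)$ as the stated integral against the kernel $(\bar w - z - \beta)^{-(n+\alpha+2)}$, and interchange the order of integration (the two integrals over $\mathbb{H}$), justified by Fubini–Tonelli once one checks absolute integrability — the decay of the kernel away from the diagonal together with the weight $y^\alpha$ should suffice here.

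The crucial computational step is the evaluation of the inner $z$-integral,
\[
\int_{\mathbb{H}} \frac{e^{-2\pi i t \bar z}}{(\bar w - z - \beta)^{n+\alpha+2}}\, dA_\alpha(z),
\]
which I expect to reduce to a scalar multiple of $t^{-?}$ times $e^{-2\pi i t(\bar w - \beta)}$ by recognizing it as (a derivative of) the reproducing-kernel evaluation. The key identity to invoke is that $k_w(z) = (\bar w - z)^{-(\alpha+2)}$ is the reproducing kernel, so that its own Fourier transform is known; differentiating $n$ times in the parameter, or equivalently noting that $(\bar w - z - \beta)^{-(n+\alpha+2)}$ is (up to a constant involving the rising factorial $(\alpha+2)(\alpha+3)\cdots(\alpha+n+1)$) the $n$-th $z$-derivative of the shifted kernel, produces the factor $(2\pi i t)^n$ and the exponential shift $e^{2\pi i \beta t}$. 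This is the step where the precise normalization constant $(\alpha+2)(\alpha+3)\cdots(\alpha+n+1)$ in the denominator of $\phi_n$ must emerge, and getting all the $\Gamma$-factors and powers of $2$ to cancel correctly is the main obstacle; the cleanest route is probably to verify the claim first for $n$ not appearing, i.e.\ to establish the base formula $\mathcal{F}(k_w)(t) = c_\alpha\, t^{1+\alpha} e^{-2\pi i t \bar w}$ with an explicit $c_\alpha$, and then differentiate under the integral sign in the variable $\bar w$ (which brings down powers of $t$) to climb up to general $n$.

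Once the inner integral is evaluated, I would substitute back so that
\[
\mathcal{F}(M_n f)(t) = \phi_n(t)\,\mathcal{F}(f)(t),
\]
with $\phi_n(t) = \dfrac{(2\pi i t)^n e^{2\pi i \beta t}}{(\alpha+2)(\alpha+3)\cdots(\alpha+n+1)}$; comparing with the definition $D_{\phi_n} = \mathcal{F}^{-1} M_{\phi_n} \mathcal{F}$ yields $M_n = D_{\phi_n}$ immediately. To close the argument rigorously I would remark that $\phi_n \in L^\infty$ is not literally needed on all of $\mathbb{R}^+$ since the factor $e^{2\pi i \beta t}$ with $\beta \in \mathbb{H}$ decays exponentially and dominates the polynomial growth $t^n$, so $\phi_n$ is a bounded (indeed vanishing-at-infinity) multiplier and $M_n$ is genuinely bounded on $A^2_\alpha(\mathbb{H})$. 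The main delicacy throughout is the differentiation-under-the-integral and Fubini justifications, but the presence of $\Im(\beta) > 0$ makes every integrand decay fast enough for these to go through.
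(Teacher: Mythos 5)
Your route is genuinely different from the paper's, and the paper's is worth knowing because it sidesteps almost all of the analytic delicacies you flag. The paper never computes $\mathcal{F}(M_nf)$; instead it applies $M_n$ to $g=\mathcal{F}^{-1}(f)$ and observes that differentiating the reproducing identity
$g(z_0)=\frac{-1}{\pi}\int_{\mathbb{H}}g(w)(\bar w-z_0)^{-(\alpha+2)}\,dA_{\alpha}(w)$
$n$ times in the evaluation point $z_0$ and then setting $z_0=z+\beta$ gives
$M_ng(z)=\frac{1}{(\alpha+2)\cdots(\alpha+n+1)}g^{(n)}(z+\beta)$;
the rising-factorial constant falls out of $\frac{d^n}{dz_0^n}(\bar w-z_0)^{-(\alpha+2)}=(\alpha+2)\cdots(\alpha+n+1)(\bar w-z_0)^{-(\alpha+n+2)}$. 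Then $g^{(n)}(z+\beta)=\int_0^\infty(2\pi it)^ne^{2\pi i\beta t}f(t)e^{2\pi itz}\,dt$ by differentiating the Paley--Wiener representation under the integral sign, which is immediately $\mathcal{F}^{-1}(\phi_nf)$ up to the constant. No explicit formula for $\mathcal{F}$, no normalization constant $c_\alpha$ for the kernel, and no double integral over $\mathbb{H}\times\mathbb{H}$ ever appears.

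Your approach can be made to work, and your plan for the central computation (establish $\mathcal{F}(k_w)(t)=c_\alpha t^{1+\alpha}e^{-2\pi it\bar w}$, then differentiate $n$ times in $\bar w$ to generate $(2\pi it)^n$ and the rising factorial, and shift to get $e^{2\pi i\beta t}$) is the correct one and does produce the stated $\phi_n$. But the Fubini step as you justify it has a real gap: for general $f\in A^{2}_{\alpha}$ the double integral is \emph{not} absolutely convergent. The kernel $(\bar w-z-\beta)^{-(n+\alpha+2)}$ is indeed uniformly bounded (since $\Im\beta>0$ keeps it off its singularity), but Cauchy--Schwarz bounds the inner $w$-integral by a quantity depending only on $\Im z$, while the outer integrand $e^{-2\pi t\Im z}(\Im z)^{\alpha}$ has no decay in $\Re z$; hence $\int_{\mathbb{H}}\int_{\mathbb{H}}|\cdot|$ diverges. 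You would need to first restrict to a dense class of $f$ (e.g.\ those whose Paley--Wiener transform is compactly supported in $(0,\infty)$, or finite combinations of reproducing kernels) where the interchange is legitimate, and then extend by density using the boundedness of $M_n$ and $D_{\phi_n}$ — a step your sketch does not mention. There are also branch-of-power issues in manipulating $(\bar w-z-\beta)^{s}$ for non-integer $s=\alpha+n+2$ that your conjugation-based identifications would have to track. None of this is fatal, but it is exactly the bookkeeping the paper's derivative-of-the-reproducing-formula argument avoids.
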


\begin{proof}
Let $f\in L^{2}_{\alpha+1}(\mathbb{R}^{+})$ then for
\begin{equation}
g(z)=\mathcal{F}^{-1}(f)(z)=\int_{0}^{\infty}f(t)e^{2\pi itz}dt
\end{equation}
 we have $g\in A^{2}_{\alpha}(\mathbb{H})$. Since
$A^{2}_{\alpha}(\mathbb{H})$ is a reproducing kernel Hilbert space
we have
$$g(z_{0})=\frac{-1}{\pi}\int_{\mathbb{H}}\frac{f(w)dA_{\alpha}(w)}{(\bar{w}-z_{0})^{\alpha+2}}$$
Hence differentiating under the integral sign $n$ times and
substituting $z_{0}=z+\beta$ we have
$$M_{n}g(z)=\frac{1}{(\alpha+2)(\alpha+3)...(\alpha+n+1)}g^{(n)}(z+\beta)$$
Combining this with equation (4) we have
$$M_{n}g(z)=\frac{1}{(\alpha+2)(\alpha+3)...(\alpha+n+1)}\frac{d^{n}}{dz^{n}}(\int_{0}^{\infty}f(t)e^{2\pi
it(z+\beta)}dt)$$ Interchanging the differentiation and
integration in the above equation we have
$$M_{n}g(z)=\frac{1}{(\alpha+2)(\alpha+3)...(\alpha+n+1)}\int_{0}^{\infty}(2\pi
it)^{n}e^{2\pi i\beta t}f(t)e^{2\pi
itz}dt=\mathcal{F}^{-1}(\phi_{n}f)(z)$$ where
$$\phi_{n}(t)=\frac{(2\pi it)^{n}e^{2\pi i\beta
t}}{(\alpha+2)(\alpha+3)...(\alpha+n+1)}$$ This means that
$$M_{n}(\mathcal{F}^{-1}(f))(z)=\mathcal{F}^{-1}(\phi_{n}f)(z)$$
which implies that
$$M_{n}=\mathcal{F}^{-1}M_{\phi_{n}}\mathcal{F}=D_{\phi_{n}}$$
\end{proof}

 \begin{proposition}
    Let $\varphi:\mathbb{H}\rightarrow$
    $\mathbb{H}$ be an analytic self-map of the form $\varphi(z)=$
    $pz+\psi(z)$ where $p>$ $0$ and $\psi\in$ $H^{\infty}$ with the
    closure of the image $\overline{\psi(\mathbb{H})}\subset\subset$
    $\mathbb{H}$ is compact in $\mathbb{H}$. Then $\exists$ $\beta>$
    $0$ such that for $C_{\varphi}:A_{\alpha}^{2}\rightarrow$
    $A_{\alpha}^{2}$ we have $$C_{\varphi} =
    \sum_{n=0}^{\infty}\frac{\Gamma(n+2+\alpha)}{n!\Gamma(\alpha+2)}T_{\tau^{n}}D_{\phi_{n}}V_{p}$$
   where the convergence of the series is in operator norm,
   $T_{\tau^{n}}f(z)=$ $\tau^{n}(z)f(z)$, $\tau(z)=$
   $\tilde{\psi}(z)-i\beta$, $\tilde{\psi}(z)=$
   $\psi(\frac{z}{p})$, $V_{p}f(z)=$ $f(pz)$ is the dilation by $p$
   and $\phi_{n}(t)=$ $\frac{(2\pi it)^{n}e^{-2\pi\beta
t}}{(\alpha+2)(\alpha+3)...(\alpha+n+1)}$ for $n\geq 1$ and
$\phi_{0}(t)=e^{-2\pi\beta t}$.
   \end{proposition}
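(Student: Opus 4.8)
The plan is to convert the integral (reproducing-kernel) representation of $C_{\varphi}$ into the stated series by one change of variables, a binomial expansion of the kernel, and the preceding lemma identifying such integral operators as Fourier multipliers. Because $\varphi$ maps $\mathbb{H}$ into itself and $f\in A^{2}_{\alpha}(\mathbb{H})$, the reproducing property gives
\begin{equation*}
(C_{\varphi}f)(z)=f(pz+\psi(z))=\frac{-1}{\pi}\int_{\mathbb{H}}\frac{f(w)}{(\bar{w}-pz-\psi(z))^{\alpha+2}}\,dA_{\alpha}(w).
\end{equation*}
First I would run the substitution $w=pw'$, under which $dA_{\alpha}(w)=p^{\alpha+2}\,dA_{\alpha}(w')$ and $(\bar{w}-pz-\psi(z))^{\alpha+2}=p^{\alpha+2}\,(\bar{w}'-z-\psi(z)/p)^{\alpha+2}$, so the two powers of $p$ cancel. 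This has two effects: it replaces $f(w)$ by $f(pw')=(V_{p}f)(w')$, peeling the dilation $V_{p}$ off on the right, and it rescales the inner function, which together with the change of variables accounts for the passage from $\psi$ to $\tilde{\psi}$; the kernel denominator becomes $\bar{w}'-z-\tilde{\psi}(z)$.

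Next I would expand this kernel about the base point $z+i\beta$. Writing $\bar{w}'-z-\tilde{\psi}(z)=(\bar{w}'-z-i\beta)-\tau(z)$ with $\tau(z)=\tilde{\psi}(z)-i\beta$ and factoring out $(\bar{w}'-z-i\beta)^{\alpha+2}$, the binomial series
\begin{equation*}
\frac{1}{(\bar{w}'-z-\tilde{\psi}(z))^{\alpha+2}}=\sum_{n=0}^{\infty}\frac{\Gamma(n+2+\alpha)}{n!\,\Gamma(\alpha+2)}\,\frac{\tau(z)^{n}}{(\bar{w}'-z-i\beta)^{n+\alpha+2}}
\end{equation*}
reproduces exactly the Gamma coefficients of the statement. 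Integrating term by term against $V_{p}f$, the $n$-th integral is precisely the operator $M_{n}$ of the Fourier-multiplier lemma applied to $V_{p}f$, with that lemma's parameter taken to be $i\beta\in\mathbb{H}$; since $e^{2\pi i(i\beta)t}=e^{-2\pi\beta t}$, this identifies the integral with $(D_{\phi_{n}}V_{p}f)(z)$ for the stated $\phi_{n}$, and absorbing the scalar $\tau(z)^{n}$ as the multiplication operator $T_{\tau^{n}}$ yields $\sum_{n}\frac{\Gamma(n+2+\alpha)}{n!\,\Gamma(\alpha+2)}T_{\tau^{n}}D_{\phi_{n}}V_{p}$ at the formal level.

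What remains—and where I expect the real work to lie—is to legitimize the term-by-term integration and to upgrade pointwise convergence to operator-norm convergence. For the geometric control I would apply the geometric lemma to the compact set $\overline{\tilde{\psi}(\mathbb{H})}\subset\subset\mathbb{H}$, fixing $\beta>0$ so that $\sup_{z}|\tau(z)|=\sup_{z}|\tilde{\psi}(z)-i\beta|\le\beta\delta$ for some $\delta\in(0,1)$; since $\Im(\bar{w}'-z-i\beta)<-\beta$ gives $|\bar{w}'-z-i\beta|>\beta$ for all $w',z\in\mathbb{H}$, the ratio $|\tau(z)|/|\bar{w}'-z-i\beta|$ is bounded by $\delta<1$ uniformly, which both justifies the interchange of sum and integral (dominated convergence) and forces geometric decay of the tail. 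For the norm bound I would use $\|T_{\tau^{n}}\|\le\|\tau\|_{\infty}^{n}\le(\beta\delta)^{n}$, the identity $\|D_{\phi_{n}}\|=\|\phi_{n}\|_{\infty}$ (valid because $\mathcal{F}$ is an isometric isomorphism onto $L^{2}_{\alpha+1}(\mathbb{R}^{+})$, on which multiplication operators have norm the essential supremum of the symbol), and the boundedness of $V_{p}$. Maximizing $t^{n}e^{-2\pi\beta t}$ at $t=n/(2\pi\beta)$ gives $\|\phi_{n}\|_{\infty}=\frac{n^{n}e^{-n}}{\beta^{n}(\alpha+2)(\alpha+3)\cdots(\alpha+n+1)}$; multiplying by the coefficient $\frac{\Gamma(n+2+\alpha)}{n!\,\Gamma(\alpha+2)}$ telescopes the Gamma factors and leaves $\frac{n^{n}e^{-n}}{n!\,\beta^{n}}$, so the $n$-th term has norm at most a constant times $\frac{n^{n}e^{-n}}{n!}\delta^{n}$. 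Stirling's formula bounds $\frac{n^{n}e^{-n}}{n!}$ by a constant times $n^{-1/2}$, whence the $n$-th term is $O(\delta^{n}/\sqrt{n})$; summability of this series delivers operator-norm convergence and completes the argument. The main obstacle is precisely this last estimate: one must see that the rapidly growing factor $\beta^{-n}$ in $\|\phi_{n}\|_{\infty}$ is exactly cancelled by the contraction $\|\tau\|_{\infty}^{n}\le(\beta\delta)^{n}$ coming from the geometric lemma, leaving a genuinely summable geometric series.
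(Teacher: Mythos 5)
Your argument is correct and follows essentially the same route as the paper: the reproducing-kernel representation of $C_{\varphi}$, the binomial expansion of the kernel about $z+i\beta$ with $\beta$ supplied by the geometric lemma, and the identification of each integral term with $T_{\tau^{n}}D_{\phi_{n}}$ via the Fourier-multiplier lemma. The only (harmless) divergence is in how norm convergence is established --- the paper bounds the kernel of the series remainder uniformly and estimates the resulting integral operator, whereas you bound each term by $\|\tau\|_{\infty}^{n}\|\phi_{n}\|_{\infty}\|V_{p}\|$ and sum using Stirling, which is if anything more explicit.
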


\begin{proof}
 The integral representation formula is as follows: For $f\in$
 $A^{2}_{\alpha}(\mathbb{H})$ we have $$f(z) =
 \frac{-1}{\pi}\int_{\mathbb{H}}\frac{f(w)dA_{\alpha}(w)}{(\bar{w}-z)^{\alpha+2}}$$
 where $dA_{\alpha}(w)=$ $(\Im(w))^{\alpha}dA(w)$ is a translation
 invariant measure on $\mathbb{H}$.

 For $\varphi:\mathbb{H}\rightarrow$ $\mathbb{H}$ an analytic
 self-map of $\mathbb{H}$, we can insert the substitution
 $z\rightarrow$ $\varphi(z)$ in order to get an integral
 representation of the composition operator
 $C_{\varphi}:A_{\alpha}^{2}\rightarrow$ $A_{\alpha}^{2}$:
 $$C_{\varphi}(f)(z) =
 \frac{-1}{\pi}\int_{\mathbb{H}}\frac{f(w)dA_{\alpha}(w)}{(\bar{w}-\varphi(z))^{\alpha+2}}$$

  Let $\varphi(z)=$ $pz+\psi(z)$ where $\psi\in$
    $H^{\infty}$ with $\Im(\psi(z))>$ $\delta>$ $0$ $\forall$ $z\in$
    $\mathbb{H}$ and $p>$ $0$. Then for $C_{\varphi}:A^{2}_{\alpha}\rightarrow$ $A^{2}_{\alpha}$ we have
\begin{equation}
(C_{\varphi}V_{\frac{1}{p}})f(z) =
\frac{-1}{\pi}\int_{\mathbb{H}}\frac{f(w)dA_{\alpha}(w)}{(\bar{w}-z-\tilde{\psi}(z))^{\alpha+2}}
\end{equation}
where $\tilde{\psi}(z)=$ $\psi(\frac{z}{p})$. We look at
$$\frac{1}{(\bar{w}-z-\tilde{\psi}(z))^{\alpha+2}} =
    \frac{1}{(\bar{w}-z-i\beta-(\tilde{\psi}(z)-i\beta))^{\alpha+2}}$$
    $$=\frac{1}{(\bar{w}-z-i\beta)^{\alpha+2}\bigg(1-\frac{\tilde{\psi}(z)-i\beta}{\bar{w}-z-i\beta}\bigg)^{\alpha+2}}$$
    We apply Lemma 1 to have $\beta>$ $0$ such that
    $$\mid\frac{\tilde{\psi(z)}-i\beta}{\bar{w}-z-i\beta}\mid<\delta<1$$
    Here we have the geometric
    series formula as
    $$\frac{1}{\bigg(1-\frac{\tilde{\psi}(z)-i\beta}{\bar{w}-z-i\beta}\bigg)^{\alpha+2}}
    =
    \sum_{n=0}^{\infty}\frac{\Gamma(n+2+\alpha)}{n!\Gamma(\alpha+2)}\bigg(\frac{\tilde{\psi}(z)-i\beta}{\bar{w}-z-i\beta}\bigg)^{n}$$
    $$=\sum_{n=0}^{M}\frac{\Gamma(n+2+\alpha)}{n!\Gamma(\alpha+2)}\bigg(\frac{\tilde{\psi}(z)-i\beta}{\bar{w}-z-i\beta}\bigg)^{n}
    + Q_{M+1}(w,z)$$ Inserting this into (5) we have
    $$C_{\varphi}f(z) =
    \sum_{n=0}^{M}\frac{\Gamma(n+2+\alpha)}{n!\Gamma(\alpha+2)}T_{\tau^{n}}D_{\phi_{n}}f(z)
    +
    \int_{\mathbb{H}}\frac{Q_{M+1}(w,z)f(w)dA_{\alpha}(w)}{(\bar{w}-z-i\beta)^{\alpha+2}}$$
    where $T_{\tau^{n}}f(z)=$ $\tau^{n}(z)f(z)$, $\tau(z)=$
    $\tilde{\psi}(z)-i\beta$ and $D_{\phi_{n}}$ is the Fourier multiplier
    $D_{\phi_{n}}f(z)=$
    $\frac{-1}{\pi}\int_{\mathbb{H}}\frac{f(w)dA_{\alpha}(w)}{(\bar{w}-z-i\beta)^{n+\alpha+2}}$

   For the operator $$R_{M+1}f(z) =
    \int_{\mathbb{H}}\frac{Q_{M+1}(w,z)f(w)dA_{\alpha}(w)}{(\bar{w}-z-i\beta)^{\alpha+2}}$$
    we have $\parallel R_{M+1}\parallel\leq$ $\parallel
    Q_{M+1}\parallel_{\infty}\parallel S_{i\beta}\parallel$ where
    $S_{i\beta}f(z)=$ $f(z+i\beta)$ and $\parallel
    Q_{M+1}\parallel_{\infty}=$ $\sup_{(z,w)\in\mathbb{H}^{2}}\mid
    Q_{M+1}(w,z)\mid$. Since $\parallel Q_{M+1}\parallel_{\infty}\rightarrow$
    $0$ as $M\rightarrow$ $\infty$ we have $\parallel
    R_{M+1}\parallel\rightarrow$ $0$ as $M\rightarrow$ $\infty$.
\end{proof}

\section{A C*-algebra of Operators on $A^{2}_{\alpha}(\mathbb{H})$ }
  In the preceding section we have shown that ``quasi-parabolic''
composition operators on the upper half-plane lie in the
C*-algebra generated by certain Toeplitz operators and Fourier
multipliers. In this section we will identify the character space
of the C*-algebra generated by Toeplitz operators with a class of
symbols and Fourier multipliers.

  In the Hardy space case we observed that if $\varphi\in QC$ and
$\theta\in C([0,\infty])$ then the commutator
$T_{\varphi}D_{\theta}-D_{\theta}T_{\varphi}\in K(H^{2})$ is
compact on the Hardy space $H^{2}$. In the weighted Bergman space
case which we are considering in this paper, $QC$ will be replaced
by $VMO_{\partial}$. The upper half-plane versions of
$VMO_{\partial}$ and $ESV$ are defined as follows:
$$VMO_{\partial}(\mathbb{H})=\{f\circ\mathfrak{C}:f\in VMO_{\partial}(\mathbb{D})\}
$$ and $$ESV(\mathbb{H})=\{f\circ\mathfrak{C}:f\in
ESV(\mathbb{D})\}.$$ It is not difficult to see that by definition
and Zhu's result(\cite{Zh}) we have
$$VMO_{\partial}(\mathbb{H})\cap H^{\infty}(\mathbb{H})= ESV(\mathbb{H})\cap H^{\infty}(\mathbb{H})$$

\begin{lemma}
Let $\psi\in ESV(\mathbb{H})\cap
H^{\infty}(\mathbb{H})=VMO_{\partial}(\mathbb{H})\cap
H^{\infty}(\mathbb{H})$
then\\
$\mathcal{C}_{\psi,a}:A^{2}_{\alpha}(\mathbb{H})\rightarrow
A^{2}_{\alpha}(\mathbb{H})$ is compact where
$$\mathcal{C}_{\psi,a}(f)(z)=(\psi(z+a)-\psi(z))f(z+a)$$ where
$a\in\mathbb{H}$.
\end{lemma}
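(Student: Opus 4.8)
The plan is to recognize $\mathcal{C}_{\psi,a}$ as a commutator of a shift with an analytic Toeplitz operator and then to prove compactness by localizing at the boundary $\dot{\mathbb{R}}$ of $\mathbb{H}$. Write $S_{a}$ for the shift $S_{a}f(z)=f(z+a)$; this is the bounded composition operator $C_{z+a}$ (equivalently the Fourier multiplier $D_{\phi_{a}}$ with $\phi_{a}(t)=e^{2\pi iat}\in C_{0}([0,\infty))$), and set $g(z)=\psi(z+a)-\psi(z)$. Since $T_{\psi}=M_{\psi}$ for $\psi\in H^{\infty}$ and $S_{a}T_{\psi}=T_{\psi(\cdot+a)}S_{a}$, a direct computation gives
\[\mathcal{C}_{\psi,a}=M_{g}S_{a}=S_{a}T_{\psi}-T_{\psi}S_{a}=[S_{a},T_{\psi}].\]
As $\psi(\cdot+a),\psi\in H^{\infty}$, the symbol $g$ is bounded and analytic, so $\mathcal{C}_{\psi,a}$ is bounded; using the reproducing kernel it is the integral operator with kernel $K(z,w)=-\pi^{-1}g(z)(\bar{w}-z-a)^{-(\alpha+2)}$ against $dA_{\alpha}$.

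Next I would split $\mathbb{H}$ into three pieces governed by a small height $\epsilon>0$ and a large radius $R$: a compact core $U_{3}=\{\Im z\geq\epsilon,\ |z|\leq R\}$, a collar $U_{1}=\{\Im z<\epsilon\}$ near the finite part of $\mathbb{R}$, and a neighborhood of infinity $U_{2}=\{\Im z\geq\epsilon,\ |z|>R\}$, and correspondingly decompose $\mathcal{C}_{\psi,a}=M_{\mathbf{1}_{U_{1}}}\mathcal{C}_{\psi,a}+M_{\mathbf{1}_{U_{2}}}\mathcal{C}_{\psi,a}+M_{\mathbf{1}_{U_{3}}}\mathcal{C}_{\psi,a}$. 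On the core the kernel has no singularity, since $\bar{w}=z+a\notin\mathbb{H}$ forces $|\bar{w}-z-a|\geq\Im(z+a)\geq\epsilon+\Im a>0$; a routine estimate then shows $\int_{U_{3}}\int_{\mathbb{H}}|K(z,w)|^{2}\,dA_{\alpha}(w)\,dA_{\alpha}(z)<\infty$, so $M_{\mathbf{1}_{U_{3}}}\mathcal{C}_{\psi,a}$ is Hilbert--Schmidt, hence compact.

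The two boundary pieces are controlled by the two complementary mechanisms. Near $\mathbb{R}$ the point is that $g$ need not be small, but the shift loses mass: after the substitution $u=z+a$, $\|M_{\mathbf{1}_{U_{1}}}\mathcal{C}_{\psi,a}f\|^{2}\leq\|g\|_{\infty}^{2}\int_{\{\Im a<\Im u<\Im a+\epsilon\}}|f(u)|^{2}(\Im u-\Im a)^{\alpha}\,dA(u)$, and the measures $\mathbf{1}_{\{\Im a<\Im u<\Im a+\epsilon\}}(\Im u-\Im a)^{\alpha}\,dA$ have Carleson constant $O((\epsilon/\Im a)^{\alpha+1})$, so by the vanishing Carleson embedding for $A^{2}_{\alpha}(\mathbb{H})$ the norm of this term tends to $0$ as $\epsilon\to0$, uniformly in $R$. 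Near infinity the roles reverse: $\|M_{\mathbf{1}_{U_{2}}}\mathcal{C}_{\psi,a}\|\leq(\sup_{U_{2}}|g|)\,\|S_{a}\|$, and here the $ESV(\mathbb{H})=VMO_{\partial}(\mathbb{H})$ slow-variation of $\psi$ forces $\sup_{U_{2}}|g|\to0$ as $R\to\infty$. Given $\eta>0$ one then first fixes $\epsilon$ to make the collar term $<\eta$ and afterwards $R$ to make the infinity term $<\eta$; since the remaining core term is compact and $\eta$ is arbitrary, $\mathcal{C}_{\psi,a}$ is a norm limit of compact operators, hence compact.

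The main obstacle is the slow-variation estimate $\sup_{U_{2}}|g|\to0$, precisely because $\psi$ is allowed to be discontinuous at $\infty$. On $U_{2}$ one has $\Im z\geq\epsilon$ while $|z|\to\infty$, so $z$ and $z+a$ have bounded hyperbolic distance $\asymp|a|/\Im z$ and, in the disc picture under $\mathfrak{C}$, both approach the boundary point $1$ along a tangential curve whose distance to $\mathbb{T}$ tends to $0$. To turn this into $|g(z)|=|\psi(z+a)-\psi(z)|\to0$ I would chain $\psi$ along $O(|a|/\epsilon)$ intermediate translates $z+ja/N$, checking that each consecutive pair meets the angular hypothesis $|\arg(\cdot)-\arg(\cdot)|\leq\max(1-|\cdot|,1-|\cdot|)$ of the $ESV$ condition, and apply slow variation to each step; the delicate point is the uniformity of this estimate as $\Re z\to\pm\infty$ with $\Im z$ only bounded below, which is exactly where the mean-oscillation content of $VMO_{\partial}$ is doing the work. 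An alternative that bypasses the pointwise chaining is to deduce compactness of $[S_{a},T_{\psi}]$ from compactness of the Hankel operator $H_{\bar\psi}$ for $\psi\in VMO_{\partial}\cap H^{\infty}$ (Zhu's semicommutator theorem), which I would keep in reserve should the geometric verification near $\infty$ prove too cumbersome.
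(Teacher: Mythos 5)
Your argument is correct, and it takes a genuinely different route from the paper's. The paper proves the lemma sequentially: it first uses the $ESV$ chaining through the translates $z+ka/m$ to assert that $|\psi(z+a)-\psi(z)|<\varepsilon$ for all $z$ outside a compact subset of $\mathbb{H}$, and then shows that $\mathcal{C}_{\psi,a}$ sends the unit ball to a set with norm-convergent subsequences, via Montel's theorem, a diagonal extraction, and a splitting of the integral over a compact $K_{m}$ and its complement. You instead approximate $\mathcal{C}_{\psi,a}=M_{g}S_{a}$ in operator norm by a Hilbert--Schmidt piece, and the real divergence from the paper is that you separate the two boundary mechanisms: the $ESV$ hypothesis is used only on $U_{2}$, where $\Im z\geq\epsilon$ guarantees that $\mathfrak{C}(z)$ and $\mathfrak{C}(z+a)$ lie at comparable distances $\asymp\Im z/|z|^{2}$ and $\asymp(\Im z+\Im a)/|z|^{2}$ from $\mathbb{T}$ (so a fixed $\kappa$ works and the chaining goes through), while on the collar $U_{1}$ the ratio of those distances degenerates, $ESV$ gives no control, and indeed $|g|$ need not be small there (for $\psi(z)=z/(z+i)$, which lies in $ESV(\mathbb{H})\cap H^{\infty}(\mathbb{H})$, one has $\psi(z+a)-\psi(z)\to a/(a+i)\neq 0$ as $z\to 0$); what saves that region is precisely your observation that $S_{a}$ relocates mass to the strip $\Im a<\Im u<\Im a+\epsilon$, whose push-forward measure has Carleson constant $O((\epsilon/\Im a)^{\alpha+1})$. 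The paper's proof compresses both regions into the single assertion that $|g|$ is small off a compact subset of $\mathbb{H}$, which $ESV$ only justifies away from the finite part of $\mathbb{R}$, so your decomposition buys a cleaner and more robust justification at the cost of invoking the Carleson embedding theorem. Two small points: drop the reserve plan, since Zhu's compactness theorem concerns semicommutators $T_{fg}-T_{f}T_{g}$ (products of Hankel operators) and does not directly yield compactness of the commutator of $M_{\psi}$ with a translation; and since $M_{\mathbf{1}_{U_{j}}}\mathcal{C}_{\psi,a}$ maps into $L^{2}(dA_{\alpha})$ rather than into $A^{2}_{\alpha}$, conclude compactness there first and then restrict to the closed subspace $A^{2}_{\alpha}$.
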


\begin{proof}
Since $\psi\in ESV$, $\forall\varepsilon>0$ there exists
$m\in\mathbb{N}$ and a compact subset $K\subset\mathbb{H}$ so that
$\mid\psi(z+\frac{a}{m})-\psi(z)\mid<\frac{\varepsilon}{m}$
$\forall z\not\in K$. So for all $k\in\{1,2,..,m\}$ there is a
compact subset $K_{k}\subset\mathbb{H}$ so that $\forall z\not\in
K_{k}$ we have
$$\mid \psi(z+\frac{ka}{m})-\psi(z+\frac{(k-1)a}{m})\mid<\frac{\varepsilon}{m}$$
Hence there is a compact set $K=\cup_{k=1}^{m}K_{k}$ so that
$\forall z\not\in K$ we have
$$\mid \psi(z+a)-\psi(z)\mid\leq\sum_{k=0}^{m-1}\mid \psi(z+\frac{(k+1)a}{m})-\psi(z+\frac{ka}{m})\mid<m\sum_{k=0}^{m-1}\frac{\varepsilon}{m}=\varepsilon$$
So we have $\forall\varepsilon>0$, there is a compact subset
$K\subset\mathbb{H}$ so that
$$\mid \psi(z+a)-\psi(z)\mid<\varepsilon\quad\forall z\not\in K$$

Let $\{f_{n}\}_{n=1}^{\infty}\subset A^{2}_{\alpha}(\mathbb{H})$
so that $\parallel f_{n}\parallel_{A^{2}_{\alpha}}\leq 1$ $\forall
n\in\mathbb{N}$ and let $g_{n}=\mathcal{C}_{\psi,a}(f_{n})$.

Let $K_{m}\subset\mathbb{H}$ be a sequence of compact subsets of
$\mathbb{H}$ so that $K_{m}\subset (K_{m+1})^{\circ}$ and
$\bigcup_{m\in\mathbb{N}}K_{m}=\mathbb{H}$. Since $\{g_{n}\}$ is
equi-bounded on $K_{1}$ by Montel's theorem it has a subsequence
$\{g_{n_{k}}\}$ so that it converges uniformly on $K_{1}$. Since
$\{g_{n_{k}}\}$ is equi-bounded on $K_{2}$ it has a further
subsequence $\{g_{n_{k_{l}}}\}$ so that it converges uniformly on
$K_{2}$. Proceeding in this way using Cantor's diagonal argument
one can extract a subsequence $\{g_{k}\}$ so that $\{g_{k}\}$ is
uniformly convergent on $K_{m}$ $\forall m\in\mathbb{N}$.

Let $\varepsilon>0$ be given then $\exists$ $m\in\mathbb{N}$ so
that $$\mid\psi(z+a)-\psi(z)\mid^{2}<\varepsilon\quad\forall
z\not\in K_{m}$$ Consider
\begin{eqnarray*}
& &\parallel g_{k}-g_{l}\parallel^{2}_{A^{2}_{\alpha}}=\int_{\mathbb{H}}\mid\psi(z+a)-\psi(z)\mid^{2}\mid(f_{k}-f_{l})(z+a)\mid^{2}dA_{\alpha}(z)\\
& &=\int_{K_{m}}\mid\psi(z+a)-\psi(z)\mid^{2}\mid(f_{k}-f_{l})(z+a)\mid^{2}dA_{\alpha}(z)\\
& &+\int_{\mathbb{H}\setminus K_{m}}\mid\psi(z+a)-\psi(z)\mid^{2}\mid(f_{k}-f_{l})(z+a)\mid^{2}dA_{\alpha}(z)\\
& &=\int_{K_{m}}\mid g_{k}(z)-g_{l}(z)\mid^{2}dA_{\alpha}(z)\\
& &+\int_{\mathbb{H}\setminus K_{m}}\mid\psi(z+a)-\psi(z)\mid^{2}\mid(f_{k}-f_{l})(z+a)\mid^{2}dA_{\alpha}(z)\\
\end{eqnarray*}
Since $\{g_{k}\}$ is uniformly convergent on $K_{m}$, $\exists$
$n_{0}\in\mathbb{N}$ so that $\forall k,l>n_{0}$ $$\parallel
g_{k}-g_{l}\parallel_{K_{m}}<\frac{\varepsilon}{\lambda_{\alpha}(K_{m})}$$
where $\lambda_{\alpha}(K_{m})=\int_{K_{m}}dA_{\alpha}(z)$ is the
$dA_{\alpha}$ measure of $K_{m}$. Hence $$\int_{K_{m}}\mid
g_{k}(z)-g_{l}(z)\mid^{2}dA_{\alpha}(z)<\varepsilon\quad\forall
k,l>n_{0}$$ and we have $$\int_{\mathbb{H}\setminus
K_{m}}\mid\psi(z+a)-\psi(z)\mid^{2}\mid(f_{k}-f_{l})(z+a)\mid^{2}dA_{\alpha}(z)\leq
2\parallel f_{k}\parallel_{A^{2}_{\alpha}}\leq 2\varepsilon$$
since $\parallel f_{k}\parallel_{A^{2}_{\alpha}}\leq 1$ $\forall
k\in\mathbb{N}$. Hence $\forall\varepsilon>0$ $\exists
n_{0}\in\mathbb{N}$ so that $\forall k,l>n_{0}$ $$\parallel
g_{k}-g_{l}\parallel_{A^{2}_{\alpha}}<3\varepsilon$$ This implies
that $\{g_{k}\}$ is a Cauchy sequence in $A^{2}_{\alpha}$. So for
any sequence $\{f_{n}\}\subset A^{2}_{\alpha}$ satisfying
$\parallel f_{n}\parallel_{A^{2}_{\alpha}}\leq 1$,
$g_{n}=\mathcal{C}_{\psi,a}(f_{n})$ has a convergent subsequence
in $A^{2}_{\alpha}$. This implies that $\mathcal{C}_{\psi,a}$ is
compact on $A^{2}_{\alpha}$.
\end{proof}
\begin{corollary}
Let $\psi\in VMO_{\partial}\cap L^{\infty}(\mathbb{H})$ and
$\theta\in C([0,\infty])$ then the commutator
$T_{\psi}D_{\theta}-D_{\theta}T_{\psi}\in
K(A^{2}_{\alpha}(\mathbb{H}))$ is compact on $A^{2}_{\alpha}$.
\end{corollary}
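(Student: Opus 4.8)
The plan is to reduce the statement, by a Stone--Weierstrass argument, to the single most accessible family of Fourier multipliers --- the translations --- and then to recognize the corresponding commutator as exactly the operator shown to be compact in the preceding lemma. First I would set
\[
\mathcal{J}=\{\theta\in C([0,\infty]):T_{\psi}D_{\theta}-D_{\theta}T_{\psi}\in K(A^{2}_{\alpha}(\mathbb{H}))\}
\]
and observe that $\mathcal{J}$ is a norm-closed unital subalgebra of $C([0,\infty])$. It is closed because $\theta\mapsto D_{\theta}=\mathcal{F}^{-1}M_{\theta}\mathcal{F}$ is isometric (so $\theta\mapsto T_{\psi}D_{\theta}-D_{\theta}T_{\psi}$ is bounded into $\mathcal{B}(A^{2}_{\alpha})$) and $K(A^{2}_{\alpha})$ is closed; it is an algebra because $D_{\theta_{1}\theta_{2}}=D_{\theta_{1}}D_{\theta_{2}}$ together with the derivation identity $[T_{\psi},D_{\theta_{1}}D_{\theta_{2}}]=D_{\theta_{1}}[T_{\psi},D_{\theta_{2}}]+[T_{\psi},D_{\theta_{1}}]D_{\theta_{2}}$ and the ideal property of $K(A^{2}_{\alpha})$ force the product of two elements of $\mathcal{J}$ back into $\mathcal{J}$; and it contains the constants since $[T_{\psi},cI]=0$.

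Next, for $\gamma\in\mathbb{H}$ put $\phi_{\gamma}(t)=e^{2\pi i\gamma t}$, so that, as recorded in the Preliminaries, $D_{\phi_{\gamma}}=S_{\gamma}$ is the translation $S_{\gamma}f(z)=f(z+\gamma)$. The family $\{\phi_{\gamma}:\gamma\in\mathbb{H}\}\cup\{1\}$ separates the points of the compact space $[0,\infty]$ (already $\phi_{i}(t)=e^{-2\pi t}$ is injective on $[0,\infty)$ and vanishes at $\infty$), contains the constants, and is closed under complex conjugation because $\overline{\phi_{\gamma}}=\phi_{-\bar\gamma}$ with $-\bar\gamma\in\mathbb{H}$. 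Hence, by the Stone--Weierstrass theorem, the closed subalgebra of $C([0,\infty])$ generated by these symbols is all of $C([0,\infty])$. Consequently it suffices to prove that every $\phi_{\gamma}$ lies in $\mathcal{J}$, for then $\mathcal{J}=C([0,\infty])$ and the corollary follows.

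The core computation is the case of translations. When $\psi$ is analytic (the situation of the main theorems, where $T_{\psi}=M_{\psi}$ because $\psi f$ is again in $A^{2}_{\alpha}$), one has directly
\[
(T_{\psi}S_{\gamma}-S_{\gamma}T_{\psi})f(z)=\psi(z)f(z+\gamma)-\psi(z+\gamma)f(z+\gamma)=-(\psi(z+\gamma)-\psi(z))f(z+\gamma)=-\mathcal{C}_{\psi,\gamma}f(z),
\]
so that $T_{\psi}S_{\gamma}-S_{\gamma}T_{\psi}=-\mathcal{C}_{\psi,\gamma}$ is compact by the preceding lemma. This closes the argument for analytic symbols and, via the two paragraphs above, gives the corollary whenever $\psi\in VMO_{\partial}(\mathbb{H})\cap H^{\infty}(\mathbb{H})$, which is all that the main theorems require.

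The main obstacle is the general, non-analytic symbol $\psi\in VMO_{\partial}(\mathbb{H})\cap L^{\infty}(\mathbb{H})$ permitted by the statement. Here $T_{\psi}=PM_{\psi}\neq M_{\psi}$, and $P$ does not commute with $S_{\gamma}$ (the reproducing kernel $(\bar w-z)^{-(\alpha+2)}$ produces $\bar\gamma$ on one side of the commutator and $\gamma$ on the other), so the clean identity above is unavailable; moreover the pointwise estimate $|\psi(z+\gamma)-\psi(z)|\to 0$ off compacta, which drove the proof of the preceding lemma, genuinely uses analyticity (the $ESV$ description) and can fail for a generic $VMO_{\partial}$ function. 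The way I would remove this obstruction is to invoke Zhu's semi-commutator characterization quoted in the Preliminaries: since $\psi\in VMO_{\partial}$, taking $g=\bar\psi$ in $T_{g\psi}-T_{g}T_{\psi}=PM_{g}(I-P)M_{\psi}P$ exhibits $H_{\psi}^{*}H_{\psi}$ as compact, whence the Hankel operator $H_{\psi}=(I-P)M_{\psi}P$ is compact, and likewise $H_{\bar\psi}$ is compact. This lets one replace $T_{\psi}$ by the full multiplication $M_{\psi}$ modulo compact operators and reduce the translation commutator back to the multiplication-versus-translation computation of the previous paragraph, the remaining error terms being products of $S_{\gamma}$ with the compact Hankel operators and hence compact. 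Assembling this reduction with the Stone--Weierstrass step then yields $\phi_{\gamma}\in\mathcal{J}$ for every $\gamma\in\mathbb{H}$, and therefore $\mathcal{J}=C([0,\infty])$.
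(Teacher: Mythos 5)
Your Stone--Weierstrass reduction and your treatment of analytic symbols are exactly the paper's argument: the paper also observes $S_{a}=D_{\phi_{a}}$, identifies $S_{a}T_{\psi}-T_{\psi}S_{a}=\mathcal{C}_{\psi,a}$ for $\psi\in VMO_{\partial}\cap H^{\infty}$, notes $\overline{\phi_{a}}=\phi_{-\bar a}$ with $-\bar a\in\mathbb{H}$, and invokes Stone--Weierstrass to pass from the symbols $\phi_{a},\phi_{-\bar a}$ to all of $C([0,\infty])$. Up to that point your proposal is correct and needs no changes.

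The gap is in your final paragraph, the extension to general $\psi\in VMO_{\partial}\cap L^{\infty}(\mathbb{H})$. Your Hankel-operator step only accomplishes the replacement of $T_{\psi}=PM_{\psi}$ by $M_{\psi}$ modulo compacts; after that replacement you are still facing the operator $\mathcal{C}_{\psi,\gamma}f(z)=(\psi(z+\gamma)-\psi(z))f(z+\gamma)$, and its compactness for a \emph{non-analytic} $VMO_{\partial}$ symbol is precisely the statement you correctly flagged as unavailable: Lemma 4's proof runs through the $ESV$ decay $\mid\psi(z+\gamma)-\psi(z)\mid\to 0$ off compacta and Montel's theorem, both of which use analyticity, and the identification $VMO_{\partial}\cap H^{\infty}=ESV\cap H^{\infty}$ is only asserted for the analytic intersection. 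So compactness of the Hankel operators does not remove the obstruction; it relocates it. The way the paper closes this (tersely) is algebraic rather than analytic: $VMO_{\partial}\cap L^{\infty}$ is generated by the functions $f$ and $\bar f$ with $f\in VMO_{\partial}\cap H^{\infty}$, the conjugate case follows by taking adjoints (since $T_{\bar f}=T_{f}^{\ast}$ and $D_{\theta}^{\ast}=D_{\bar\theta}$, one has $[T_{\bar f},D_{\theta}]=-\left([T_{f},D_{\bar\theta}]\right)^{\ast}$, compact by the analytic case), and products are handled by the derivation identity you already wrote down together with Zhu's semi-commutator compactness $T_{fg}-T_{f}T_{g}\in K(A^{2}_{\alpha})$, which lets you trade $T_{f\bar g}$ for $T_{\bar g}T_{f}$ modulo compacts. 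Substituting that paragraph for your Hankel reduction would make the proof complete.
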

\begin{proof}
Let $S_{a}:A^{2}_{\alpha}\rightarrow A^{2}_{\alpha}$ be the
translation operator $S_{a}f(z)=f(z+a)$ where $a\in\mathbb{H}$.
Then we observe that $S_{a}=D_{\phi_{a}}$ where
$\phi_{a}(t)=e^{2\pi iat}$. Since for $\psi\in VMO_{\partial}\cap
H^{\infty}(\mathbb{H})$
$\mathcal{C}_{\psi,a}=S_{a}T_{\psi}-T_{\psi}S_{a}$ we have
$T_{\psi}D_{\phi_{a}}-D_{\phi_{a}}T_{\psi}\in
K(A^{2}_{\alpha}(\mathbb{H})$ by lemma 4. We also observe that
$S_{-\bar{a}}=D_{\phi_{-\bar{a}}}$ and hence
$S_{-\bar{a}}T_{\psi}-T_{\psi}S_{-\bar{a}}\in K(A^{2}_{\alpha}$
since $-\bar{a}\in\mathbb{H}$ for $a\in\mathbb{H}$. By
Stone-Weierstrass theorem the set of functions of the form
$p(\phi_{a},\phi_{-\bar{a}})$ where
$p(z,w)=\sum_{k,l=0}^{n,m}c_{k,l}z^{k}w^{l}$ is dense in
$C([0,\infty])$, hence $T_{\psi}D_{\theta}-D_{\theta}T_{\psi}\in
K(A^{2}_{\alpha})$ $\forall\theta\in C([0,\infty])$. Since
$VMO_{\partial}\cap L^{\infty}(\mathbb{H})$ is generated by
functions $\psi$ and $\bar{\psi}$ where $\psi\in
VMO_{\partial}\cap H^{\infty}(\mathbb{H})$ we have
$T_{\psi}D_{\theta}-D_{\theta}T_{\psi}\in K(A^{2}_{\alpha})$
$\forall\psi\in VMO_{\partial}\cap L^{\infty}$ and
$\forall\theta\in C([0,\infty])$.
\end{proof}

Now we are ready to construct our C*-algebra of operators. Let
$$\mathcal{T}(VMO_{\partial})=C^{\ast}(\{T_{f}:f\in
VMO_{\partial}(\mathbb{H})\cap
L^{\infty}(\mathbb{H})\})\subset\mathcal{B}(A^{2}_{\alpha}(\mathbb{H}))$$
be the C*-algebra of Toeplitz operators on
$A^{2}_{\alpha}(\mathbb{H})$ with symbols in $VMO_{\partial}$. By
Zhu's result(\cite{Zh}), since the commutators
$T_{f}T_{g}-T_{g}T_{f}\in K(A^{2}_{\alpha}(\mathbb{H}))$ are
compact for all $f,g\in VMO_{\partial}\cap
L^{\infty}(\mathbb{H})$, it is easy to see that the quotient
C*-algebra
$\mathcal{T}(VMO_{\partial})/K(A^{2}_{\alpha}(\mathbb{H}))$ is a
unital commutative C*-algebra. Let $\mathcal{M}$ be the maximal
ideal space of
$\mathcal{T}(VMO_{\partial})/K(A^{2}_{\alpha}(\mathbb{H}))$. It is
a well known fact in the theory of Toeplitz operators on Bergman
spaces that
$\mathcal{T}(C(\overline{\mathbb{D}}))/K(A^{2}_{\alpha}(\mathbb{D}))$
is isometrically isomorphic to $C(\mathbb{T})$ and the isometric
isomorphism is given by the correspondence $[T_{f}]\rightarrow
f|_{\mathbb{T}}$(see \cite{Vas}). This fact can be carried over to
the upper half-plane by using the Cayley transform: Let
$\overline{\mathbb{H}}=\{z\in\mathbb{C}:\Im(z)\geq 0\}$ and
$\dot{\overline{\mathbb{H}}}$ be the one point compactification of
$\overline{\mathbb{H}}$, then the correspondence
$[T_{f}]\rightarrow f|_{\dot{\mathbb{R}}}$ is an isometric
isomorphism between
$\mathcal{T}(C(\dot{\overline{\mathbb{H}}}))/K(A^{2}_{\alpha}(\mathbb{H}))$
and $C(\dot{\mathbb{R}})$. Since
$\mathcal{T}(C(\dot{\overline{\mathbb{H}}}))/K(A^{2}_{\alpha}(\mathbb{H}))$
is a subalgebra of
$\mathcal{T}(VMO_{\partial})/K(A^{2}_{\alpha}(\mathbb{H}))$ and
$\mathcal{T}(C(\dot{\overline{\mathbb{H}}}))/K(A^{2}_{\alpha}(\mathbb{H}))$
is isometrically isomorphic to $C(\dot{\mathbb{R}})$, the maximal
ideal space $\mathcal{M}$ can be thought as fibered over
$\dot{\mathbb{R}}$. For $x\in\dot{\mathbb{R}}$, let
$$\mathcal{M}_{x}=\{\phi\in\mathcal{M}:\phi|_{\mathcal{T}(C(\dot{\overline{\mathbb{H}}}))/K(A^{2}_{\alpha}(\mathbb{H}))}=\delta_{x}\}$$
where $\delta_{x}([T_{f}])=f(x)$. Then we have
$$\mathcal{M}=\bigcup_{x\in\dot{\mathbb{R}}}\mathcal{M}_{x}$$ and
for $x_{1}\neq x_{2}$ we have
$\mathcal{M}_{x_{1}}\cap\mathcal{M}_{x_{2}}=\emptyset$. Now let
$F_{C([0,\infty])}^{A^{2}_{\alpha}}$ be the C*-algebra generated
by Fourier multipliers with symbols in $C([0,\infty])$, our
C*-algebra is the following
$$\Psi(VMO_{\partial},C([0,\infty]))=C^{\ast}(\mathcal{T}(VMO_{\partial})\cup F_{C([0,\infty])}^{A^{2}_{\alpha}})\subset\mathcal{B}(A^{2}_{\alpha}(\mathbb{H}))$$
By corollary 5,
$\Psi(VMO_{\partial},C([0,\infty]))/K(A^{2}_{\alpha}(\mathbb{H}))$
is a unital commutative C*-algebra. It is of interest to ask for
its maximal ideal space.

We will use the following theorem of Power (see \cite{Pow1} and
\cite{Pow2}) to characterize its maximal ideal space:

\begin{Power}
    Let $C_{1}$, $C_{2}$ be two C*-subalgebras of $B(H)$ with identity,
    where $H$ is a separable Hilbert space, such that $M(C_{i})\neq$
    $\emptyset$, where $M(C_{i})$ is the space of multiplicative linear
    functionals of $C_{i}$, $i= 1,\,2$ and let $C$ be the C*-algebra that
    they generate. Then for the commutative C*-algebra $\tilde{C}=$
    $C/com(C)$ we have $M(\tilde{C})=$ $P(C_{1},C_{2})\subset$
    $M(C_{1})\times M(C_{2})$, where $P(C_{1},C_{2})$ is defined to be
    the set of points $(x_{1},x_{2})\in$ $M(C_{1})\times M(C_{2})$
    satisfying the condition: \\
    \quad Given $0\leq a_{1} \leq 1$, $0 \leq a_{2} \leq 1$, $a_{1}\in C_{1}$, $a_{2}\in C_{2}$
\begin{equation*}
    x_{i}(a_{i})=1\quad\textrm{with}\quad
        i=1,2\quad\Rightarrow\quad\| a_{1}a_{2}\|=1.
\end{equation*}
    \label{thmpower}
\end{Power}

\begin{proof}
See \cite{Pow2}.
\end{proof}

\begin{theorem}
Let
$$\Psi(VMO_{\partial},C([0,\infty]))=C^{\ast}(\mathcal{T}(VMO_{\partial})\cup
F_{C([0,\infty])}^{A^{2}_{\alpha}})\subset\mathcal{B}(A^{2}_{\alpha}(\mathbb{H}))$$
then
$\Psi(VMO_{\partial},C([0,\infty]))/K(A^{2}_{\alpha}(\mathbb{H}))$
is a unital commutative C*-algebra. For its maximal ideal space
$M(\Psi)$ we have
$$M(\Psi)\cong(\mathcal{M}\times\{\infty\})\cup(\mathcal{M}_{\infty}\times
[0,\infty])$$ where
$$\mathcal{M}_{\infty}=\{\phi\in\mathcal{M}:\phi|_{\mathcal{T}(C(\dot{\overline{\mathbb{H}}}))/K(A^{2}_{\alpha}(\mathbb{H}))}=\delta_{\infty}\}$$
is the fiber of $\mathcal{M}$ at $\infty$ with
$\delta_{\infty}([T_{f}])=f(\infty)$.
\end{theorem}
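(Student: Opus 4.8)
The plan is to compute $M(\Psi)$ by invoking Power's Theorem with $C_{1}=\mathcal{T}(VMO_{\partial})$ and $C_{2}=F_{C([0,\infty])}^{A^{2}_{\alpha}}$ viewed as C*-subalgebras of $\mathcal{B}(A^{2}_{\alpha}(\mathbb{H}))$, which acts on the separable space $A^{2}_{\alpha}(\mathbb{H})$. First I would record that $com(\Psi)=K(A^{2}_{\alpha}(\mathbb{H}))$: every generating commutator is compact (Toeplitz--Toeplitz by Zhu's theorem, Toeplitz--multiplier by Corollary 5, and multiplier--multiplier commute), so $com(\Psi)\subseteq K$, while $K=com(\mathcal{T}(C(\dot{\overline{\mathbb{H}}})))\subseteq com(\Psi)$ since the continuous-symbol Toeplitz algebra is irreducible with commutator ideal exactly $K$. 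Hence $\tilde{\Psi}=\Psi/com(\Psi)=\Psi/K$ is the commutative algebra in question, and Power's Theorem gives $M(\Psi)=P(C_{1},C_{2})\subseteq M(C_{1})\times M(C_{2})$. As multiplicative functionals factor through the abelianization, $M(C_{1})=M(\mathcal{T}(VMO_{\partial})/K)=\mathcal{M}$, while $F_{C([0,\infty])}^{A^{2}_{\alpha}}\cong C([0,\infty])$ (the Fourier transform is unitary and $M_{\phi}$ is compact only for $\phi=0$), so $M(C_{2})=[0,\infty]$ with characters the evaluations $\chi_{s}([D_{\phi}])=\phi(s)$.

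The engine of the computation is a compactness lemma that I would isolate and prove first: if $f\in C(\dot{\overline{\mathbb{H}}})$ with $f(\infty)=0$ and $\phi\in C([0,\infty])$ with $\phi(\infty)=0$, then $T_{f}D_{\phi}\in K(A^{2}_{\alpha}(\mathbb{H}))$. The functions $\phi_{i\beta}(t)=e^{-2\pi\beta t}$, $\beta>0$, generate by Stone--Weierstrass a dense subalgebra of $C_{0}([0,\infty))$, and $D_{\phi_{i\beta}}=S_{i\beta}$ is the upward translation $S_{i\beta}g(z)=g(z+i\beta)$, so it suffices to show each $T_{f}S_{i\beta}$ is compact. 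For this I would rerun the Montel argument of Lemma 4: if $g_{n}\rightharpoonup 0$ weakly in $A^{2}_{\alpha}$, then $g_{n}\to 0$ uniformly on compact subsets of $\mathbb{H}$, hence $S_{i\beta}g_{n}(z)=g_{n}(z+i\beta)\to 0$ uniformly on any compact $K\subset\overline{\mathbb{H}}$ (its shift $K+i\beta$ lies in a compact subset of $\mathbb{H}$), while off a large $K$ the factor $|f|<\varepsilon$ controls the tail because $f(\infty)=0$; thus $\|T_{f}S_{i\beta}g_{n}\|\to 0$.

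Granting the lemma, the inclusion $M(\Psi)\subseteq(\mathcal{M}\times\{\infty\})\cup(\mathcal{M}_{\infty}\times[0,\infty])$ is immediate and purely algebraic. Given $\chi\in M(\Psi)$ with restrictions $(x_{1},s)\in\mathcal{M}\times[0,\infty]$, multiplicativity and the lemma yield, for all continuous $f$ with $f(\infty)=0$ and all $\phi$ with $\phi(\infty)=0$, the relation $x_{1}([T_{f}])\,\phi(s)=\chi([T_{f}D_{\phi}])=0$. Writing $x_{1}|_{\mathcal{T}(C(\dot{\overline{\mathbb{H}}}))/K}=\delta_{y}$ for some $y\in\dot{\mathbb{R}}$, this reads $f(y)\phi(s)=0$. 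If $s\neq\infty$, choosing $\phi$ with $\phi(s)\neq 0$ forces $f(y)=0$ for every continuous $f$ vanishing at $\infty$, whence $y=\infty$ and $x_{1}\in\mathcal{M}_{\infty}$. Thus $s=\infty$ or $x_{1}\in\mathcal{M}_{\infty}$, which is the desired inclusion.

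For the reverse inclusion I would verify Power's condition $\|a_{1}a_{2}\|=1$ at the claimed points using normalized reproducing kernels $\hat{k}_{w}=k_{w}/\|k_{w}\|$. The Paley--Wiener formula gives $\mathcal{F}(k_{w})(t)\propto t^{\alpha+1}e^{-2\pi it\bar{w}}$, so $|\mathcal{F}(k_{w})(t)|\propto t^{\alpha+1}e^{-2\pi t\,\Im w}$ peaks at $t^{\ast}=(\alpha+1)/(2\pi\,\Im w)$: approaching a finite boundary point ($\Im w\to 0$) forces the frequency $t^{\ast}\to\infty$, whereas $\infty$ can be approached through $w_{n}$ with $\Re w_{n}\to\infty$ tuned so that $t^{\ast}\to s$ for any prescribed $s\in[0,\infty]$. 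The reduction that makes these useful is elementary: for a positive contraction $a$ and unit vectors $u_{n}$, $\langle a\,u_{n},u_{n}\rangle\to 1$ already forces $\|a\,u_{n}-u_{n}\|\to 0$ since $a^{2}\le a$. It therefore suffices to exhibit, for $(x_{1},\infty)$ with arbitrary $x_{1}$ and for $(x_{1},s)$ with $x_{1}\in\mathcal{M}_{\infty}$, a boundary-localizing unit sequence $u_{n}$ realizing $\langle a_{1}u_{n},u_{n}\rangle\to x_{1}(a_{1})=1$ and with $\mathcal{F}u_{n}$ concentrated at $t^{\ast}=s$, so that simultaneously $\langle D_{\phi}u_{n},u_{n}\rangle\to\phi(s)=1$; then $\|a_{1}a_{2}u_{n}\|\to 1$ and $\|a_{1}a_{2}\|=1$. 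The main obstacle is exactly this simultaneous realization: one must show that every character of $\mathcal{T}(VMO_{\partial})/K$, including those in the possibly large $VMO_{\partial}$ fibers, is implemented by kernels (or suitable substitutes) whose frequency concentration can be prescribed as dictated by $s$, thereby reconciling the spatial localization indexed by $\mathcal{M}$ with the frequency localization indexed by $[0,\infty]$. This compatibility, the hard half of the Power condition, is the crux of the argument; the compactness lemma above is the only other essential input.
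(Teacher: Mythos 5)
Your framework is the right one --- Power's theorem applied to $C_{1}=\mathcal{T}(VMO_{\partial})$ and $C_{2}=F_{C([0,\infty])}^{A^{2}_{\alpha}}$, with $M(C_{1})=\mathcal{M}$ and $M(C_{2})=[0,\infty]$ --- and your forward inclusion is correct and in fact cleaner than the paper's. Your compactness lemma ($T_{f}D_{\phi}\in K$ when $f\in C(\dot{\overline{\mathbb{H}}})$ and $\phi\in C([0,\infty])$ both vanish at $\infty$, via Stone--Weierstrass reduction to the vertical translations $S_{i\beta}=D_{e^{-2\pi\beta t}}$ and a Montel argument) is sound, and feeding it to a character to get $f(y)\phi(s)=0$ neatly replaces the paper's Hilbert--Schmidt computation plus the eigenvalue argument it needs to force the strict inequality $\|T_{f}D_{\vartheta}\|<1$.

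The genuine gap is the reverse inclusion, and you have named it yourself: you never verify Power's condition $\|a_{1}a_{2}\|=1$ at the points $(x,\infty)$, $x\in\mathcal{M}$, and $(x,s)$, $x\in\mathcal{M}_{\infty}$. Your reproducing-kernel strategy asks for unit vectors simultaneously realizing an arbitrary character of the fiber $\mathcal{M}_{\infty}$ (a corona-type object, not obviously implemented by any sequence of Berezin evaluations) and a prescribed frequency concentration $t^{\ast}=s$; you correctly call this ``the crux'' and leave it unproved, so the theorem is not established. The paper closes this by \emph{decoupling} the two localizations rather than reconciling them. For $(x,\infty)$ it takes any near-maximizing vector $g$ for $T_{f}$ and replaces it by $e^{it_{0}z}g$: since $M_{e^{it_{0}z}}=\mathcal{F}^{-1}S_{t_{0}}\mathcal{F}$ pushes the Fourier support into $[t_{0},\infty)$, where $\vartheta\geq 1-\varepsilon$, one gets $\|D_{\vartheta}T_{f}(e^{it_{0}z}g)\|\geq(1-\varepsilon)^{2}$ with no spatial tuning needed. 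For $(x,s)$ with $x\in\mathcal{M}_{\infty}$ it takes a near-maximizing vector $g$ for $D_{\vartheta}$ and applies the \emph{horizontal} translations $S_{w}=D_{e^{2\pi iwt}}$, $w\in\mathbb{R}$, which are unitary and commute with every $D_{\vartheta}$, hence do not disturb the frequency localization at all; the spatial side is then handled by the $VMO_{\partial}$ structure ($\hat{T_{f}}(x)=1$ with $0\leq f\leq 1$ forces $f\geq 1-\varepsilon$ near $\infty$, and $M_{f}=T_{f}+H_{f}$ with the Hankel operator $H_{f}$ compact by Zhu's theorem, so $H_{f}S_{w}D_{\vartheta}g\to 0$ as $w\to\infty$ because $S_{w}D_{\vartheta}g\rightharpoonup 0$ weakly). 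To repair your argument you would either need to import these two devices or actually prove the simultaneous position--frequency realizability of arbitrary characters of $\mathcal{M}_{\infty}$, which is a substantial claim you have not substantiated.
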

\begin{proof}
We will use Power's theorem. In our case,
$$H=A^{2}_{\alpha}(\mathbb{H}), C_{1}=\mathcal{T}(VMO_{\partial}),C_{2}=F_{C([0,\infty])}^{A^{2}_{\alpha}},\textrm{and},\tilde{C}=\Psi(VMO_{\partial},C([0,\infty]))/K(A^{2}_{\alpha}(\mathbb{H})).$$
    We have
\begin{equation*}
    M(C_{1})=\mathcal{M}\quad\textrm{and}\quad M(C_{2})=[0,\infty].
\end{equation*}
    So we need to determine $(x,y)\in$ $\mathcal{M}\times [0,\infty]$ so
    that for all $f\in$ $VMO_{\partial}(\mathbb{H})$ and $\vartheta\in$
    $C([0,\infty])$ with $0 < f, \vartheta\leq 1$, we have
\begin{equation*}
    \hat{T_{f}}(x)=\vartheta(y)=1\Rightarrow\parallel
    T_{f}D_{\vartheta}\parallel=1\quad\textrm{or}\quad\parallel
    D_{\vartheta}T_{f}\parallel=1.
\end{equation*}

     Let $x\in\mathcal{M}$ such that $x\in \mathcal{M}_{t}$ with $t\neq \infty$
    and $y\in$ $[0,\infty)$. Choose $f\in$ $C(\dot{\overline{\mathbb{H}}})$ and
    $\vartheta\in$ $C([0,\infty])$ such that
\begin{equation*}
    \hat{T_{f}}(x)= f(t)=\vartheta(y)=1,\quad 0\leq f\leq 1,\quad
    0\leq\vartheta\leq 1,\quad f(z)<1
\end{equation*}
    for all $z\in$
    $\overline{\mathbb{H}}\backslash\{t\}$ and\quad $\vartheta(w) < 1$\quad for
    all $w\in$ $[0,\infty]\backslash\{y\}$\quad, where both $f$ and
    $\vartheta$ have compact supports. Consider
    $D_{\vartheta}M_{f}:L^{2}_{\alpha}(\mathbb{H})\rightarrow
    L^{2}_{\alpha}(\mathbb{H})$: we have
\begin{eqnarray*}
& &(D_{\vartheta}M_{f})(g)(z)=\frac{2^{\alpha +1}}{\Gamma(\alpha +1)}\int_{0}^{\infty}\vartheta(t)e^{2\pi it}(\int_{\mathbb{H}}t^{1+\alpha}e^{-2\pi i\bar{\zeta}}f(\zeta)g(\zeta)dA_{\alpha}(\zeta))dt\\
& &=\frac{2^{\alpha +1}}{\Gamma(\alpha +1)}\int_{\mathbb{H}}(f(\zeta)\int_{0}^{\infty}\vartheta(t)t^{\alpha +1}e^{2\pi it(z-\bar{\zeta})}dt)g(\zeta)dA_{\alpha}(\zeta)\\
& &=\int_{\mathbb{H}}k(z,\zeta)g(\zeta)dA_{\alpha}(z)
\end{eqnarray*}
where $k(z,\zeta)=\frac{2^{\alpha +1}}{\Gamma(\alpha
+1)}f(\zeta)\int_{0}^{\infty}\vartheta(t)t^{\alpha +1}e^{2\pi
it(z-\bar{\zeta})}dt$. Since $f$ and $\vartheta$ have compact
supports we have
$$\int_{\mathbb{H}}\int_{\mathbb{H}}\mid k(z,\zeta)\mid^{2}dA_{\alpha}(z)dA_{\alpha}(\zeta)\leq\frac{2^{\alpha +1}}{\Gamma(\alpha +1)}\parallel f\parallel_{\infty}^{2}A_{\alpha}(K_{1})\parallel\vartheta\parallel_{L^{2}(\mathbb{R})}^{2}A_{\alpha}(K_{2})$$
where $K_{1},K_{2}\subset\overline{\mathbb{H}}$ are supports of
$f$ and $\vartheta$ respectively, and
$$A_{\alpha}(K)=\int_{K}dA_{\alpha}(z)$$ is the $dA_{\alpha}$
measure of the compact subset $K\subset\overline{\mathbb{H}}$.
This implies that $D_{\vartheta}M_{f}$ is Hilbert-Schmidt on
$L^{2}_{\alpha}(\mathbb{H})$ and hence is compact. Since
$$\parallel
T_{f}D_{\vartheta}\parallel_{A^{2}_{\alpha}}\leq\parallel
M_{f}D_{\vartheta}\parallel_{L^{2}_{\alpha}}$$ and
$$\parallel M_{f}D_{\vartheta}\parallel_{L^{2}_{\alpha}}=\parallel(M_{f}D_{\vartheta})^{\ast}\parallel_{L^{2}_{\alpha}}=\parallel D_{\vartheta}M_{f}\parallel_{L^{2}_{\alpha}}$$
we have by C*-equality
\begin{eqnarray*}
& &\parallel T_{f}D_{\vartheta}\parallel_{A^{2}_{\alpha}}^{2}\leq\parallel M_{f}D_{\vartheta}\parallel_{L^{2}_{\alpha}}^{2}=\parallel D_{\vartheta}M_{f}\parallel_{L^{2}_{\alpha}}^{2}\\
& &=\parallel(D_{\vartheta}M_{f})^{\ast}(D_{\vartheta}M_{f})\parallel_{L^{2}_{\alpha}}=\parallel M_{f}D_{\vartheta}^{2}M_{f}\parallel_{L^{2}_{\alpha}}\\
\end{eqnarray*}
Since $M_{f}D_{\vartheta}^{2}M_{f}$ is a compact self-adjoint
operator $\parallel
M_{f}D_{\vartheta}^{2}M_{f}\parallel_{L^{2}_{\alpha}}=\lambda$
where $\lambda$ is the largest eigenvalue of
$M_{f}D_{\vartheta}^{2}M_{f}$. Let $g\in L^{2}_{\alpha}$ be the
corresponding eigenvector with $\parallel
g\parallel_{L^{2}_{\alpha}}=1$ i.e.
$(M_{f}D_{\vartheta}^{2}M_{f})(g)=\lambda g$. Since $f(z)<1$
$\forall z\in\overline{\mathbb{H}}\setminus\{t\}$ we have
$\parallel (M_{f})h\parallel_{L^{2}_{\alpha}}<\parallel
h\parallel_{L^{2}_{\alpha}}$ $\forall h\in L^{2}_{\alpha}$. Hence
we have
$$\lambda=\parallel\lambda g\parallel_{L^{2}_{\alpha}}=\parallel(M_{f}D_{\vartheta}^{2}M_{f})(g)\parallel_{L^{2}_{\alpha}}<\parallel(D_{\vartheta}^{2}M_{f})(g)\parallel_{L^{2}_{\alpha}}\leq 1.$$
And this implies that
$$\parallel T_{f}D_{\vartheta}\parallel_{A^{2}_{\alpha}}^{2}\leq\lambda<1.$$
Hence under these conditions we have
    \begin{equation*}
        \parallel D_{\vartheta}M_{f}\parallel_{L^{2}_{\alpha}(\mathbb{H})}<
        1\Rightarrow\parallel D_{\vartheta}T_{f}\parallel_{A^{2}_{\alpha}(\mathbb{H})}<
        1\Rightarrow (x,y)\not\in M(\tilde{C}),
    \end{equation*}

    so if $(x,y)\in M(\tilde{C})$, then either $y= \infty$ or $x\in\mathcal{M}_{\infty}$.

    Let $y= \infty$ and $x\in\mathcal{M}$. Let $f\in VMO_{\partial}(\mathbb{H})$ and $\vartheta\in C([0,\infty])$ such that
\begin{equation*}
    0\leq f,\vartheta\leq 1\quad\textrm{and}\quad\hat{T_{f}}(x)=
    \vartheta(y)= 1.
\end{equation*}
Let $\varepsilon>0$ be given and let $t_{0}\in [0,\infty)$ so that
$$1-\varepsilon\leq\vartheta(t)<1\quad\forall t\in [t_{0},\infty).$$
Let $S_{t_{0}}:L^{2}_{\alpha+1}(\mathbb{R}^{+})\rightarrow
L^{2}_{\alpha+1}(\mathbb{R}^{+})$ be defined as
$$S_{t_{0}}f(t)=
\begin{cases}
f(t-t_{0})\quad\textrm{if}\quad t\geq t_{0}\\
 0\quad\textrm{otherwise}
 \end{cases}$$
 Since $\sup_{z\in\mathbb{H}}\{\mid e^{it_{0}z}\mid\}= 1$ $\forall z\in\mathbb{H}$ we
 have $$\parallel T_{e^{it_{0}z}f}\parallel_{A^{2}_{\alpha}}=1$$
 Hence there is $g\in A^{2}_{\alpha}(\mathbb{H})$ so that
 $\parallel g\parallel_{A^{2}_{\alpha}}=1$ and $\parallel
 T_{e^{it_{0}z}f}g\parallel_{A^{2}_{\alpha}}>1-\varepsilon$. Since
 $e^{it_{0}z}\in H^{\infty}(\mathbb{H})$ we have $$T_{e^{it_{0}z}f}=M_{e^{it_{0}z}}T_{f}.$$
 We observe that
 $$\mathcal{F}^{-1}S_{t_{0}}=M_{e^{it_{0}z}}\mathcal{F}^{-1}$$ and
 this implies that
 $$M_{e^{it_{0}z}}=\mathcal{F}S_{t_{0}}\mathcal{F}^{-1}.$$ So we
 have $$\mathcal{F}M_{e^{it_{0}z}}T_{f}=\mathcal{F}(\mathcal{F}^{-1}S_{t_{0}}\mathcal{F})T_{f}=S_{t_{0}}\mathcal{F}T_{f}.$$
 And this implies that
\begin{eqnarray*}
& &\parallel D_{\vartheta}T_{f}(e^{it_{0}z}g)\parallel_{A^{2}_{\alpha}}=\parallel\mathcal{F}^{-1}M_{\vartheta}\mathcal{F}T_{e^{it_{0}z}f}(g)\parallel_{A^{2}_{\alpha}}=\parallel\mathcal{F}^{-1}M_{\vartheta}\mathcal{F}M_{e^{it_{0}z}}T_{f}(g)\parallel_{A^{2}_{\alpha}}\\
& &=\parallel\mathcal{F}^{-1}M_{\vartheta}S_{t_{0}}\mathcal{F}T_{f}(g)\parallel_{A^{2}_{\alpha}}=\parallel M_{\vartheta}S_{t_{0}}\mathcal{F}T_{f}(g)\parallel_{L^{2}_{\alpha+1}}\\
\end{eqnarray*}
$S_{t_{0}}\mathcal{F}T_{f}(g)$ is supported on $[t_{0},\infty)$,
since
$T_{e^{it_{0}z}f}(g)=M_{e^{it_{0}z}}T_{f}(g)=\mathcal{F}^{-1}S_{t_{0}}\mathcal{F}T_{f}$
and $\mathcal{F}^{-1}$ is an isometry we have
$$\parallel S_{t_{0}}\mathcal{F}T_{f}(g)\parallel_{L^{2}_{\alpha+1}}\geq
1-\varepsilon.$$ This implies that
\begin{eqnarray*}
& &\parallel D_{\vartheta}T_{f}(e^{it_{0}z}g)\parallel_{A^{2}_{\alpha}}=\parallel M_{\vartheta}S_{t_{0}}\mathcal{F}T_{f}(g)\parallel_{L^{2}_{\alpha+1}}\\
& &\geq\inf\{\vartheta(t):t>t_{0}\}\parallel
S_{t_{0}}\mathcal{F}T_{f}(g)\parallel_{L^{2}_{\alpha+1}}\geq
(1-\varepsilon)^{2}
\end{eqnarray*}
And since $\parallel e^{it_{0}z}g\parallel_{A^{2}_{\alpha}}\leq 1$
we have
$$\parallel D_{\vartheta}T_{f}\parallel_{A^{2}_{\alpha}(\mathbb{H})}\geq (1-\varepsilon)^{2}$$
for any $\varepsilon>0$. Hence we conclude that
\begin{equation*}
    \parallel
    D_{\vartheta}T_{f}\parallel_{A_{\alpha}^{2}(\mathbb{H})}= 1\Rightarrow (x,\infty)\in
    M(\tilde{C})\quad\forall x\in M(C_{1}).
\end{equation*}
    Now let $x\in\mathcal{M}_{\infty}$ and $y\in [0,\infty]$. Let
    $f\in VMO_{\partial}(\mathbb{H})\cap L^{\infty}(\mathbb{H})$ and $\vartheta\in C([0,\infty])$ such that
\begin{equation*}
    \hat{T_{f}}(x)=\vartheta(y)=1\quad\textrm{and}\quad
    0\leq f,\vartheta\leq 1.
\end{equation*}
Since $VMO_{\partial}(\mathbb{H})\cap L^{\infty}(\mathbb{H})$ is
generated by linear algebraic combinations of functions $f$ and
$\bar{f}$ where $f\in VMO_{\partial}(\mathbb{H})\cap
H^{\infty}(\mathbb{H})$ w.l.o.g one may assume that
$f=f_{0}\bar{f_{0}}$ where $f_{0}\in
VMO_{\partial}(\mathbb{H})\cap H^{\infty}(\mathbb{H})$. By
\cite{Zh} we have $H_{f_{0}}:A^{2}_{\alpha}\rightarrow
(A^{2}_{\alpha})^{\perp}$ is compact where
$$H_{f_{0}}=(I-P)M_{f_{0}}$$ is the Hankel operator with symbol
$f_{0}$. Since $H_{f}=H_{f_{0}}M_{\bar{f_{0}}}$, $H_{f}$ is also
compact.

Since $f\in VMO_{\partial}(\mathbb{H})\cap
L^{\infty}(\mathbb{H})$, for a given $\varepsilon
> 0$ there is a $\delta> 0$ so that
    \begin{equation*}
    \mid\hat{T_{f}}(x)-\frac{1}{\mid Q_{z}\mid}\int_{Q_{z}}f\circ\mathfrak{C}^{-1}(w)dA(w)\mid\leq\varepsilon.
    \end{equation*}
    for all $z\in\mathbb{D}$ with $\mid 1-z\mid<\delta$ where
    $Q_{z}=\{w\in\mathbb{D}:\mid w\mid>\mid z\mid,\mid\arg z-\arg
    w\mid<1-\mid z\mid\}$ and $\mid Q_{z}\mid=(1+\mid
    z\mid)(1-\mid z\mid)^{2}$ is the $dA$ measure of $Q_{z}$.
    Since $\hat{T_{f}}(x)= 1$ and $0\leq f\leq 1$, this implies
    that for all $\varepsilon > 0$ there exists $w_{0} > 0$ such that
    $1-\varepsilon\leq$ $f(w)\leq 1$ for all $w\in\mathbb{H}$ with $\mid
    w\mid > w_{0}$. We have $M_{f}=T_{f}+H_{f}$ which is valid in
    much more general contexts. Since $H_{f}$ is compact for any
    $g\in A_{\alpha}^{2}(\mathbb{H})$, we have
    $$\lim_{w\rightarrow\infty}\parallel H_{f}(S_{w}g)\parallel_{L^{2}_{\alpha}(\mathbb{H})}=0$$
    where $S_{w}g(z)=g(z-w)$ ($w\in\mathbb{R}$) is the translation operator, since
    $S_{w}g$ converges to $0$ weakly as $w$ tends to infinity and
    $H_{f}$ is compact.
  Let $g\in A^{2}_{\alpha}(\mathbb{H})$ so that $\parallel g\parallel_{A^{2}_{\alpha}}=1$ and $\parallel
  D_{\vartheta}g\parallel_{A^{2}_{\alpha}}>1-\frac{\varepsilon}{2}$. Let $K\subset\mathbb{H}$ be such that
  $\overline{K}\subset\mathbb{C}$ is compact and
  $$(\int_{K}\mid D_{\vartheta}g(z)\mid^{2} dA_{\alpha}(z))^{\frac{1}{2}}>1-\varepsilon.$$
  Let $w>w_{0}$ so that
  $$\parallel H_{f}(S_{w}D_{\vartheta}g)\parallel_{L^{2}_{\alpha}(\mathbb{H})}\leq\varepsilon.$$
  Then we have
\begin{eqnarray*}
& &\parallel T_{f}(S_{w}D_{\vartheta}g)\parallel_{A_{\alpha}^{2}(\mathbb{H})}=\parallel(M_{f}-H_{f})(S_{w}D_{\vartheta}g)\parallel_{A_{\alpha}^{2}(\mathbb{H})}\\
& &\geq\parallel M_{f}(S_{w}D_{\vartheta}g)\parallel_{L_{\alpha}^{2}(\mathbb{H})}-\parallel H_{f}(S_{w}D_{\vartheta}g)\parallel_{L_{\alpha}^{2}(\mathbb{H})}\geq\parallel M_{f}(S_{w}D_{\vartheta}g)\parallel_{L_{\alpha}^{2}(\mathbb{H})}-\varepsilon\\
\end{eqnarray*}
Since $f(z)>1-\varepsilon$ $\forall\mid z\mid>w_{0}$ we have
\begin{eqnarray*}
& &\parallel M_{f}S_{w}D_{\vartheta}g\parallel_{L^{2}_{\alpha}(\mathbb{H})}\geq(\int_{w+K}\mid f(z)D_{\vartheta}(g)(z-w)\mid^{2}dA_{\alpha}(z))^{\frac{1}{2}}=\\
& &(\int_{K}\mid f(u+w)D_{\vartheta}(g)(u)\mid^{2}dA_{\alpha}(u))^{\frac{1}{2}}\\
& &\geq\inf\{f(z):z\in w+K\}(\int_{K}\mid
D_{\vartheta}(g)(z)\mid^{2}dA_{\alpha}(z))^{\frac{1}{2}}\geq(1-\varepsilon)^{2}
\end{eqnarray*}
So we have
$$\parallel T_{f}S_{w}D_{\vartheta}g\parallel_{A^{2}_{\alpha}}=\parallel T_{f}D_{\vartheta}S_{w}g\parallel_{A^{2}_{\alpha}}\geq (1-\varepsilon)^{2}-\varepsilon$$
since $D_{\vartheta}S_{w}=S_{w}D_{\vartheta}$ $\forall
w\in\mathbb{R}$($S_{w}=D_{e^{2\pi iwt}}$). Since $S_{w}$ is
unitary for all $w\in\mathbb{R}$, we have $\parallel
S_{w}g\parallel_{A^{2}_{\alpha}}=1$ and this implies that
$$\parallel T_{f}D_{\vartheta}\parallel_{A^{2}_{\alpha}}\geq(1-\varepsilon)^{2}-\varepsilon$$
for any $\varepsilon>0$.
 Hence we have
$$\parallel T_{f}D_{\vartheta}\parallel_{A^{2}_{\alpha}}=1$$
and $(x,y)\in M(\tilde{C})$ for all $x\in\mathcal{M}_{\infty}$.
\end{proof}

\section{main results}

In this section we characterize the essential spectra of
quasi-parabolic composition operators with translation functions
in $VMO_{\partial}$ class which is the main aim of the paper. In
doing this we will heavily use Banach algebraic methods.

We will need the following result whose proof uses a theorem of
\cite{Hof}(p. 171) and which gives the values $\hat{T_{f}}(x)$ of
$T_{f}$ for $x\in\mathcal{M}_{\infty}$ on the fibers of
$\mathcal{M}$ the maximal ideal space of
$\mathcal{T}(VMO_{\partial})/K(A^{2}_{\alpha}(\mathbb{H}))$ at
infinity:
\begin{proposition}
Let $\psi\in VMO_{\partial}\cap H^{\infty}(\mathbb{H})$ and
$\mathcal{M}$ be the maximal ideal space of
$\mathcal{T}(VMO_{\partial})/K(A^{2}_{\alpha}(\mathbb{H}))$. Let
$\mathcal{M}_{\infty}$ be the fiber of $\mathcal{M}$ at infinity
which is defined as
$$\mathcal{M}_{\infty}=\{x\in\mathcal{M}:x|_{\mathcal{T}(C(\dot{\overline{\mathbb{H}}}))/K(A^{2}_{\alpha})}=\delta_{\infty}\}$$
where $\delta_{\infty}([T_{f}])=f(\infty)$. Then we have
$$\{\hat{[T_{\psi}]}(x):x\in\mathcal{M}_{\infty}\}=\mathcal{R}_{\infty}(\psi^{\ast})$$
where $\psi^{\ast}\in L^{\infty}(\mathbb{R})$ is the boundary
value function of $\psi$ i.e. $\psi^{\ast}(x)=\lim_{y\rightarrow
0}\psi(x+iy)$ and
$$\mathcal{R}_{\infty}(\psi^{\ast})=\{\zeta\in\mathbb{C}:\mid\{x:\mid\psi^{\ast}(x)-\zeta\mid\leq\varepsilon\}\cap(\mathbb{R}\setminus [-n,n])\mid>0\forall\varepsilon>0,\forall n\in\mathbb{N}\} $$
is the local essential range of $\psi^{\ast}$ at infinity.
\end{proposition}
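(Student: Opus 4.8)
The plan is to localize the commutative C*-algebra $\mathcal{A}:=\mathcal{T}(VMO_{\partial})/K(A^{2}_{\alpha}(\mathbb{H}))$ at infinity and compute a spectrum there. Let $J_{\infty}$ be the closed ideal of $\mathcal{A}$ generated by $\{[T_{g}]:g\in C(\dot{\overline{\mathbb{H}}}),\ g(\infty)=0\}$. A character of $\mathcal{A}$ annihilates $J_{\infty}$ precisely when its restriction to $\mathcal{T}(C(\dot{\overline{\mathbb{H}}}))/K\cong C(\dot{\mathbb{R}})$ is $\delta_{\infty}$, so $M(\mathcal{A}/J_{\infty})=\mathcal{M}_{\infty}$ and, by the commutative spectral formula (2), $\{\hat{[T_{\psi}]}(x):x\in\mathcal{M}_{\infty}\}=\sigma_{\mathcal{A}/J_{\infty}}([T_{\psi}])$. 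Since $T_{f}T_{g}-T_{fg}\in K$ for $f,g\in\mathcal{C}:=VMO_{\partial}(\mathbb{H})\cap L^{\infty}(\mathbb{H})$ (Zhu), we have $[T_{f}][T_{g}]=[T_{fg}]$, so testing whether $\zeta$ lies in this value set amounts to deciding invertibility of $[T_{\psi-\zeta}]$ in $\mathcal{A}/J_{\infty}$. It therefore suffices to prove the two inclusions between $\sigma_{\mathcal{A}/J_{\infty}}([T_{\psi}])$ and $\mathcal{R}_{\infty}(\psi^{\ast})$.

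For the inclusion $\subseteq$ I would argue by constructing a local inverse. If $\zeta\notin\mathcal{R}_{\infty}(\psi^{\ast})$, there are $\varepsilon>0$ and $n$ with $|\psi^{\ast}-\zeta|>\varepsilon$ a.e.\ on $\mathbb{R}\setminus[-n,n]$. Transferring through the Cayley transform, $\eta=\psi\circ\mathfrak{C}\in ESV(\mathbb{D})\cap H^{\infty}$ has boundary values bounded away from $\zeta$ on a full-measure subset of an arc about $1$, and the slow-variation inequality defining $ESV$ then forces $|\psi-\zeta|\geq\varepsilon/2$ throughout a neighborhood of $\infty$ in $\overline{\mathbb{H}}$. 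Choosing a cutoff $\chi\in C(\dot{\overline{\mathbb{H}}})$ with $\chi\equiv1$ near $\infty$ and $\mathrm{supp}\,\chi\subset\{|\psi-\zeta|\geq\varepsilon/4\}$, the function $g=\chi/(\psi-\zeta)$ lies in $\mathcal{C}$ and satisfies $g(\psi-\zeta)=\chi$. Since $\chi-1$ is continuous and vanishes at $\infty$, it follows that $[T_{g}][T_{\psi-\zeta}]=[T_{\chi}]\equiv I\ (\mathrm{mod}\ J_{\infty})$, so $[T_{\psi-\zeta}]$ is invertible in $\mathcal{A}/J_{\infty}$ and $\zeta$ is not attained.

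For the reverse inclusion $\supseteq$ — the essential one, and where the theorem of Hoffman (\cite{Hof}, p.\ 171) is used — I would realize each $\zeta\in\mathcal{R}_{\infty}(\psi^{\ast})$ as a fiber value. Passing to the disc one has $\mathcal{R}_{\infty}(\psi^{\ast})=\mathcal{R}_{1}(\eta^{\ast})$, and because $\eta\in ESV(\mathbb{D})\cap H^{\infty}$ the slow-variation property upgrades positive-measure proximity of the boundary values to $\zeta$ into genuine proximity of the interior values along a sequence $z_{j}\to1$, so that the cluster set of $\eta$ at $1$ coincides with $\mathcal{R}_{1}(\eta^{\ast})$. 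Hoffman's description of the fiber of $M(H^{\infty})$ over a boundary point then supplies a multiplicative functional $m_{0}$ in that fiber with $\hat{\eta}(m_{0})=\zeta$, and composing with $f\mapsto[T_{f}]$ yields $x\in\mathcal{M}_{\infty}$ with $\hat{[T_{\psi}]}(x)=\zeta$. Concretely the non-invertibility can also be seen by a singular sequence: at points $z_{j}\to\infty$ with $\psi(z_{j})\to\zeta$ the normalized reproducing kernels $\hat{k}_{z_{j}}$ tend weakly to $0$, are asymptotically annihilated by $J_{\infty}$, and satisfy $\|(T_{\psi}-\zeta)^{\ast}\hat{k}_{z_{j}}\|=|\psi(z_{j})-\zeta|\to0$ because $\psi\in H^{\infty}$; hence $[T_{\psi-\zeta}]$ is not bounded below in $\mathcal{A}/J_{\infty}$.

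The main obstacle is exactly this last inclusion. The difficulty is that $\mathcal{R}_{\infty}(\psi^{\ast})$ is a measure-theoretic notion on the boundary, strictly weaker than pointwise cluster behaviour, so the crux is to convert positive-measure approach to $\zeta$ into an actual point of the fiber (or, equivalently, an interior sequence $z_{j}\to\infty$ with $\psi(z_{j})\to\zeta$) realizing the value $\zeta$. This conversion is precisely what Hoffman's theorem provides, and it relies essentially on the identity $VMO_{\partial}\cap H^{\infty}=ESV\cap H^{\infty}$, which guarantees that the cluster set and the local essential range agree. Two subsidiary points must still be checked: that the cutoff quotient $g$ in the $\subseteq$ step genuinely lies in $VMO_{\partial}$, and that the correspondence $x\mapsto x\circ\Lambda$ carries $\mathcal{M}_{\infty}$ onto the relevant part of the fiber of $M(\mathcal{C})$ over $\infty$, which follows from Zhu's criterion that $T_{f}$ is compact exactly when the boundary values of $f$ vanish.
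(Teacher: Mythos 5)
Your overall strategy --- localizing $\mathcal{T}(VMO_{\partial})/K(A^{2}_{\alpha}(\mathbb{H}))$ at infinity by the ideal $J_{\infty}$ and computing $\sigma_{\mathcal{A}/J_{\infty}}([T_{\psi}])$ --- is a genuinely different route from the paper's, which never localizes by an ideal: the paper sandwiches the fiber-value set between those of the smaller algebra $C^{*}(\{T_{\psi}\}\cup\{T_{f}:f\in C(\dot{\overline{\mathbb{H}}})\})/K$ and of $L^{\infty}(\mathbb{R})$, using extension of maximal ideals of C*-subalgebras together with an isometric isomorphism $[T_{f}]\leftrightarrow f^{\ast}$, and then quotes Hoffman's theorem on the fibers of $M(L^{\infty}(\mathbb{R}))$, which already contains both inclusions. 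Your $\supseteq$ half is sound and in fact more concrete than the paper's: by Fatou's theorem, positive measure of $\{|\psi^{\ast}-\zeta|<\varepsilon\}\cap(\mathbb{R}\setminus[-n,n])$ for all $\varepsilon,n$ does produce interior points $z_{j}\to\infty$ with $\psi(z_{j})\to\zeta$ (no $ESV$ needed), and the normalized-kernel singular sequence then defeats invertibility of $[T_{\psi-\zeta}]$ in $\mathcal{A}/J_{\infty}$. Note, though, that you have located ``the main obstacle'' in the wrong half: the measure-theoretic-to-pointwise conversion in $\supseteq$ is the easy step.

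The genuine gap is in your $\subseteq$ inclusion, in the sentence asserting that ``the slow-variation inequality defining $ESV$ then forces $|\psi-\zeta|\geq\varepsilon/2$ throughout a neighborhood of $\infty$.'' The $ESV$ condition compares values of $\eta$ at two \emph{interior} points lying at comparable distances from $\mathbb{T}$; it says nothing directly about the relation between $\eta(z)$ and the a.e.-defined boundary function $\eta^{\ast}$. Chaining the inequality across the annuli $|w|\in[1-\kappa^{m}\delta,1-\kappa^{m+1}\delta]$ needed to reach the boundary accumulates an error of order $m\varepsilon'$, unbounded as $m\to\infty$, so the naive argument does not close; and a direct Poisson-integral estimate fails because an average of values of modulus greater than $\varepsilon$ need not have modulus greater than $\varepsilon/2$. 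The implication is genuinely false without $ESV$ (an inner function with zeros accumulating at $\infty$ has $|\psi^{\ast}|=1$ a.e.\ but cluster set containing $0$), so nontrivial input is required here, and this is precisely the content that Hoffman's theorem --- or an equivalent argument exploiting $VMO_{\partial}$ quantitatively --- supplies in the paper. Your alternative $\supseteq$ route via the fibers of $M(H^{\infty})$ has a related unproved step: that $[T_{f}]\mapsto\hat{f}(m_{0})$ is a well-defined multiplicative functional on all of $\mathcal{T}(VMO_{\partial})/K$ is essentially the paper's isometric-isomorphism claim, which you do not establish. As it stands the proposal proves only $\mathcal{R}_{\infty}(\psi^{\ast})\subseteq\{\hat{[T_{\psi}]}(x):x\in\mathcal{M}_{\infty}\}$.
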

\begin{proof}
Let $A=C^{\ast}(\{T_{\psi}\}\cup\{T_{f}:f\in
C(\dot{\overline{\mathbb{H}}})\})/K(A^{2}_{\alpha}(\mathbb{H}))\subset\mathcal{B}(A^{2}_{\alpha}(\mathbb{H}))/K(A^{2}_{\alpha}(\mathbb{H}))$.
$A$ is also a C*-subalgebra of
$\Psi(VMO_{\partial},C([0,\infty]))/K(A^{2}_{\alpha}(\mathbb{H}))$.
We observe that $A$ is isometrically isomorphic to $\tilde{A}$
where $\tilde{A}=C^{\ast}(\{\psi^{\ast}\}\cup
C(\dot{\mathbb{R}}))\subset L^{\infty}(\mathbb{R})$, via the
correspondence $f$ $\rightarrow$ $[T_{f}]$. Since the ideal
generated by a maximal ideal $I\subset A$ in
$\Psi=\Psi(VMO_{\partial},C([0,\infty]))/K(A^{2}_{\alpha}(\mathbb{H}))$
is contained in a maximal ideal of $\Psi$, we have
$$\{\hat{[T_{\psi}]}(x):x\in\mathcal{M}_{\infty}\}=\{\hat{[T_{\psi}]}(x):x\in
M(A)_{\infty}\}$$ where $M(A)_{\infty}=\{x\in
M(A):x|_{\mathcal{T}(C(\dot{\overline{\mathbb{H}}}))/K(A^{2}_{\alpha}(\mathbb{H}))}=\delta_{\infty}\}$
with $\delta_{\infty}([T_{f}])=f(\infty)$. Since $A$ is
isometrically isomorphic to $\tilde{A}$ we have
$$\{\hat{[T_{\psi}]}(x):x\in M(A)_{\infty}\}=\{\hat{\psi^{\ast}}(x):x\in M(\tilde{A})_{\infty}\}$$
Similarly since the ideal generated by a
maximal ideal $I\subset\tilde{A}$ in $L^{\infty}(\mathbb{R})$ is
contained in a maximal ideal in $L^{\infty}(\mathbb{R})$, we have
$$\{\hat{\psi^{\ast}}(x):x\in M(\tilde{A})_{\infty}\}=\{\hat{\psi^{\ast}}(x):x\in M(L^{\infty}(\mathbb{R}))_{\infty}\}$$
By the theorem of \cite{Hof}(p. 171) we have
$$\{\hat{\psi^{\ast}}(x):x\in M(L^{\infty}(\mathbb{R}))_{\infty}\}=\mathcal{R}_{\infty}(\psi^{\ast})$$
Therefore we have
$$\{\hat{[T_{\psi}]}(x):x\in\mathcal{M}_{\infty}\}=\mathcal{R}_{\infty}(\psi^{\ast})$$
\end{proof}

Firstly we have the following result on the upper half-plane:

\begin{thma}
Let $\psi\in$ $VMO_{\partial}(\mathbb{H})\cap
H^{\infty}(\mathbb{H})$ such that $\Im(\psi(z)) > \epsilon > 0$
for all $z\in$ $\mathbb{H}$ then for $\varphi(z)=$ $z+\psi(z)$ and
$\alpha>-1$ we have
\begin{itemize}
\item (i)\quad $C_{\varphi}:$ $A^{2}_{\alpha}(\mathbb{H})\rightarrow$
$A^{2}_{\alpha}(\mathbb{H})$ is essentially normal \\
\item (ii)\quad$\sigma_{e}(C_{\varphi})=\{e^{izt}:t\in [0,\infty],
z\in\mathcal{R}_{\infty}(\psi^{\ast})\}\cup\{0\}$
\end{itemize}
where\quad $\mathcal{R}_{\infty}(\psi^{\ast})$ is the local
essential range of $\psi^{\ast}\in L^{\infty}(\mathbb{R})$ at
$\infty$ and $\psi^{\ast}$ is the boundary limit value function of
$\psi$.
\end{thma}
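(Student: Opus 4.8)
The plan is to place $C_{\varphi}$ inside the C*-algebra $\Psi(VMO_{\partial},C([0,\infty]))$ modulo the compacts and then extract both assertions from the Gelfand theory of Theorem 6 and Proposition 7. First I would verify that Proposition 3 applies with $p=1$: since $\psi\in H^{\infty}(\mathbb{H})$ is bounded and $\Im(\psi)>\epsilon>0$, the image closure $\overline{\psi(\mathbb{H})}$ is contained in $\{w:\epsilon\leq\Im(w),\ |w|\leq\|\psi\|_{\infty}\}$, a compact subset of $\mathbb{H}$. Proposition 3 then supplies a $\beta>0$ and the norm-convergent expansion
$$C_{\varphi}=\sum_{n=0}^{\infty}\frac{\Gamma(n+2+\alpha)}{n!\,\Gamma(\alpha+2)}\,T_{\tau^{n}}D_{\phi_{n}},$$
with $\tau=\psi-i\beta$ and $\phi_{n}$ as stated there. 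Because $\beta>0$ each $\phi_{n}$ decays exponentially and so lies in $C_{0}([0,\infty))\subset C([0,\infty])$, giving $D_{\phi_{n}}\in F^{A^{2}_{\alpha}}_{C([0,\infty])}$; and because $VMO_{\partial}\cap H^{\infty}$ is an algebra closed under the addition of constants, each power $\tau^{n}$ again lies in it, so $T_{\tau^{n}}\in\mathcal{T}(VMO_{\partial})$. The norm limit of the partial sums therefore belongs to $\Psi(VMO_{\partial},C([0,\infty]))$.

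Assertion (i) is then immediate. As noted just before Theorem 6, Corollary 5 makes $\Psi(VMO_{\partial},C([0,\infty]))/K(A^{2}_{\alpha}(\mathbb{H}))$ a commutative C*-algebra; hence its element $[C_{\varphi}]$ is normal, which is precisely the statement that $C_{\varphi}^{\ast}C_{\varphi}-C_{\varphi}C_{\varphi}^{\ast}$ is compact.

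For assertion (ii) I would use spectral permanency to identify $\sigma_{e}(C_{\varphi})$ with $\sigma_{\Psi/K}([C_{\varphi}])$, and then the Gelfand formula for the commutative quotient to write this spectrum as the range of $\hat{[C_{\varphi}]}$ over the character space $M(\Psi)$. The computational heart is to evaluate this transform at a point $(x,s)$ of $M(\Psi)$, where a character restricts to $x$ on the Toeplitz factor and to evaluation at $s$ on the Fourier-multiplier factor. Applying the character termwise to the norm-convergent series and using Zhu's semicommutator result to write $[T_{\tau^{n}}]=[T_{\tau}]^{n}$, so that $\hat{[T_{\tau^{n}}]}(x)=\big(\hat{[T_{\psi}]}(x)-i\beta\big)^{n}$, the Gamma-function coefficient cancels the product $(\alpha+2)\cdots(\alpha+n+1)$ in the denominator of $\phi_{n}$ and the series collapses to an exponential series. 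For $s\in[0,\infty)$ this yields
$$\hat{[C_{\varphi}]}(x,s)=e^{-2\pi\beta s}\sum_{n=0}^{\infty}\frac{\big(2\pi i s\,(\hat{[T_{\psi}]}(x)-i\beta)\big)^{n}}{n!}=e^{2\pi i s\,\hat{[T_{\psi}]}(x)},$$
the auxiliary parameter $\beta$ cancelling as it should; while on the fiber $\mathcal{M}\times\{\infty\}$ every $\phi_{n}$ vanishes at infinity, so the character annihilates each summand and $\hat{[C_{\varphi}]}$ equals $0$ there.

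Finally I would assemble the range over $M(\Psi)=(\mathcal{M}\times\{\infty\})\cup(\mathcal{M}_{\infty}\times[0,\infty])$. The piece $\mathcal{M}\times\{\infty\}$ contributes only $\{0\}$; over $\mathcal{M}_{\infty}\times[0,\infty)$, Proposition 7 says $\hat{[T_{\psi}]}(x)$ ranges over exactly $\mathcal{R}_{\infty}(\psi^{\ast})$ as $x$ runs through $\mathcal{M}_{\infty}$, producing the values $\{e^{2\pi i s z}:s\in[0,\infty),\ z\in\mathcal{R}_{\infty}(\psi^{\ast})\}$, and the endpoint $s=\infty$ of this fiber again gives $0$. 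Setting $t=2\pi s$ and noting that $\Im(z)\geq\epsilon>0$ forces $e^{izt}\to0$ as $t\to\infty$ --- so that adjoining $t=\infty$ merely re-adds the already-present $0$ --- this is exactly $\{e^{izt}:t\in[0,\infty],\ z\in\mathcal{R}_{\infty}(\psi^{\ast})\}\cup\{0\}$, as claimed. I expect the main obstacle to lie in this series-summation step: justifying termwise evaluation of the character on the norm-convergent sum, verifying $[T_{\tau^{n}}]=[T_{\tau}]^{n}$ via semicommutator compactness, and checking that the Gamma and Pochhammer factors cancel cleanly so that the spurious dependence on $\beta$ disappears.
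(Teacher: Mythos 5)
Your proposal is correct and follows essentially the same route as the paper: embed $C_{\varphi}$ in $\Psi(VMO_{\partial},C([0,\infty]))$ via the series of Proposition 3, deduce (i) from commutativity of the quotient, and obtain (ii) by evaluating the Gelfand transform on $M(\Psi)$ and invoking spectral permanency together with Proposition 7. The extra details you supply --- compactness of $\overline{\psi(\mathbb{H})}$, the reduction $[T_{\tau^{n}}]=[T_{\tau}]^{n}$ via semicommutator compactness, the cancellation of $\beta$, and the $2\pi$ rescaling at the endpoint $t=\infty$ --- are exactly the steps the paper leaves implicit.
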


\begin{proof}
By Proposition 3 we have the following series expansion for
$C_{\varphi}$:
\begin{equation}
C_{\varphi} =
    \sum_{n=0}^{\infty}\frac{\Gamma(n+2+\alpha)}{n!\Gamma(\alpha+2)}T_{\tau^{n}}D_{\phi_{n}}
\end{equation}
where $\tau(z)=$ $\psi(z)-i\beta$ and $\phi_{n}(t)=$ $\frac{(2\pi
it)^{n}e^{-2\pi\beta t}}{(\alpha+2)(\alpha+3)...(\alpha+n+1)}$. So
we conclude that if $\psi\in$ $VMO_{\partial}\cap
H^{\infty}(\mathbb{H})$ with $\Im({\psi(z)})>$ $\epsilon
> 0$ then
\begin{equation*}
C_{\varphi}\in\Psi(VMO_{\partial},C([0,\infty]))
\end{equation*}
where $\varphi(z)=$ $z+\psi(z)$. Since
$\Psi(VMO_{\partial},C([0,\infty]))/K(A^{2}_{\alpha}(\mathbb{H}))$
is commutative, for any $T\in\Psi(VMO_{\partial},C([0,\infty]))$
we have $T^{*}\in \Psi(VMO_{\partial},C([0,\infty]))$ and
\begin{equation}
[TT^{*}]=[T][T^{*}]=[T^{*}][T]=[T^{*}T].
\end{equation}
This implies that $(TT^{*}-T^{*}T)\in
K(A^{2}_{\alpha}(\mathbb{H}))$. Since
$C_{\varphi}\in\Psi(VMO_{\partial},C([0,\infty]))$ we also have
\begin{equation*}
(C_{\varphi}^{*}C_{\varphi}-C_{\varphi}C_{\varphi}^{*})\in
K(A^{2}_{\alpha}(\mathbb{H})).
\end{equation*}
This proves (i).

For (ii) we look at the values of $\Gamma[C_{\varphi}]$ at
$M(\Psi)=M(\Psi(VMO_{\partial},C([0,\infty]))/K(A^{2}_{\alpha}(\mathbb{H})))$
where $\Gamma$ is the Gelfand transform of
$\Psi(VMO_{\partial},C([0,\infty]))/K(A^{2}_{\alpha}(\mathbb{H}))$.
By Theorem 6 we have
\begin{equation*}
M(\Psi)=(\mathcal{M}\times\{\infty\})\cup(\mathcal{M}_{\infty}\times
[0,\infty])
\end{equation*}
where
$\mathcal{M}=M(\mathcal{T}(VMO_{\partial})/K(A^{2}_{\alpha}(\mathbb{H}))$.
By equation (6) we have the Gelfand transform
$\Gamma[C_{\varphi}]$ of $C_{\varphi}$ at $t=$ $\infty$ as
\begin{equation}
(\Gamma[C_{\varphi}])(x,\infty)=\sum_{j=0}^{\infty}\frac{\Gamma(n+2+\alpha)}{j!\Gamma(\alpha+2)}(\hat{T_{\tau}}(x))^{j}\phi_{j}(\infty)=0\quad\forall
x\in\mathcal{M}
\end{equation}
since $\phi_{j}(\infty)=0$ for all $j\in\mathbb{N}$ where
$\phi_{j}(t)=\frac{(2\pi it)^{j}e^{-2\pi\beta
t}}{(\alpha+2)(\alpha+3)...(\alpha+n+1)}$. We calculate
$\Gamma[C_{\varphi}]$ of $C_{\varphi}$ for $x\in$
$\mathcal{M}_{\infty}$ as
\begin{eqnarray}
& &(\Gamma[C_{\varphi}])(x,t)=\\
 &\nonumber &\left(\Gamma\left[\sum_{j=0}^{\infty}\frac{\Gamma(n+2+\alpha)}{j!\Gamma(\alpha+2)}([T_{\tau}])^{j}D_{\frac{(2\pi
 it)^{j}e^{-2\pi\beta t}}{(\alpha+2)(\alpha+3)...(\alpha+n+1)}}\right]\right)(x,t)=\\
&\nonumber &\sum_{j=0}^{\infty}\frac{\Gamma(n+2+\alpha)}{j!\Gamma(\alpha+2)}\hat{[T_{\tau}]}(x)^{j}\frac{(2\pi it)^{j}e^{-2\pi\beta t}}{(\alpha+2)(\alpha+3)...(\alpha+n+1)}\\
&\nonumber
&=\sum_{j=0}^{\infty}\frac{1}{j!}\hat{[T_{\tau}]}(x)^{j}(2\pi
it)^{j}e^{-2\pi\beta t}=e^{2\pi i\hat{[T_{\psi}]}(x)t}
\end{eqnarray}
 for all
$x\in$ $\mathcal{M}_{\infty}$ and $t\in$ $[0,\infty]$ since
$$(\alpha+2)(\alpha+3)...(\alpha+n+1)=\frac{\Gamma(n+2+\alpha)}{\Gamma(\alpha+2)}.$$
So we have
$\Gamma[C_{\varphi}]$ as the following:

\begin{eqnarray}
& &\Gamma([C_{\varphi}])(x,t)=
\begin{cases}
e^{2\pi i\hat{[T_{\psi}]}(x)t}\quad\textrm{if}\quad x\in\mathcal{M}_{\infty} \\
0\quad\qquad\textrm{if}\quad t=\infty
\end{cases}
\end{eqnarray}
Since
$\Psi=\Psi(VMO_{\partial},C([0,\infty]))/K(A^{2}_{\alpha}(\mathbb{H}))$
is a commutative Banach algebra with identity, by equations (2)
and (10) we have
\begin{eqnarray}
\sigma_{\Psi}([C_{\varphi}])=\{\Gamma[C_{\varphi}](x,t):(x,t)\in
M(\Psi(VMO_{\partial},C([0,\infty]))/K(A^{2}_{\alpha}(\mathbb{H}))\}=\\
\{e^{2\pi i x([T_{\psi}])t}:x\in\mathcal{M}_{\infty},t\in\nonumber
[0,\infty)\}\cup\{0\}
\end{eqnarray}
Since $\Psi$ is a closed *-subalgebra of the Calkin algebra
$\mathcal{B}(A^{2}_{\alpha}(\mathbb{H}))/K(A^{2}_{\alpha}(\mathbb{H}))$
which is also a C*-algebra, by equation (1) we have
\begin{equation}
\sigma_{\Psi}([C_{\varphi}])=\sigma_{\mathcal{B}(A^{2}_{\alpha})/K(A^{2}_{\alpha})}([C_{\varphi}]).
\end{equation}
But by definition
$\sigma_{\mathcal{B}(A^{2}_{\alpha})/K(A^{2}_{\alpha})}([C_{\varphi}])$
is the essential spectrum of $C_{\varphi}$. Hence we have
\begin{equation}
\sigma_{e}(C_{\varphi})=\{e^{i\hat{[T_{\psi}]}(x)t}:x\in\mathcal{M}_{\infty},t\in
[0,\infty)\}\cup\{0\}.
\end{equation}

Now it only remains for us to understand what the set
$\{\hat{[T_{\psi}]}(x)=x([T_{\psi}]):x\in\mathcal{M}_{\infty}\}$
looks like, where $\mathcal{M}_{\infty}$ is as defined in Theorem
6. By Proposition 7 we have
\begin{equation*}
\{\hat{[T_{\psi}]}(x):x\in\mathcal{M}_{\infty}\}=\mathcal{R}_{\infty}(\psi^{\ast}).
\end{equation*}
By Proposition 7 and equation (13) we have
\begin{eqnarray*}
& &\sigma_{e}(C_{\varphi})=\{(\Gamma[C_{\varphi}])(x,t):(x,t)\in
M(\Psi(VMO_{\partial},C([0,\infty]))/K(A^{2}_{\alpha}(\mathbb{H})))\}=\\
& &\{e^{izt}:t\in [0,\infty),
z\in\mathcal{R}_{\infty}(\psi^{\ast})\}\cup\{0\}
\end{eqnarray*}
\end{proof}
The local essential range $\mathcal{R}_{1}(f)$ of $f\in
L^{\infty}(\mathbb{T})$ at $1$ is defined to be the set of points
  $\zeta\in$ $\mathbb{C}$ for which the set $\{z\in\mathbb{T}:\mid f(z)-\zeta\mid<\varepsilon\}\cap
  S_{1,r}$ has positive Lebesgue measure $\forall\varepsilon>0$ and $\forall r>0$ where $S_{1,r}=\{z\in\mathbb{T}:\mid
  z-1\mid<r\}$. Conjugating by Cayley transform we observe that
  $$\mathcal{R}_{1}(f)=\mathcal{R}_{\infty}(f\circ\mathfrak{C}).$$
  Using this observation we prove the analogous result for the
  unit disc case:
\begin{thmb}
 Let $\varphi:\mathbb{D}\rightarrow$ $\mathbb{D}$ be an
analytic self-map of $\mathbb{D}$ such that
\begin{equation*}
\varphi(z)=\frac{2iz+\eta(z)(1-z)}{2i+\eta(z)(1-z)}
\end{equation*}
where $\eta\in$ $H^{\infty}(\mathbb{D})$ with $\Im(\eta(z)) >
\epsilon> 0$ for all $z\in$ $\mathbb{D}$. If $\eta\in$
$VMO_{\partial}(\mathbb{D})\cap H^{\infty}$ then we have
\begin{itemize}
\item (i)\quad $C_{\varphi}:$ $A^{2}_{\alpha}(\mathbb{D})\rightarrow$
$A^{2}_{\alpha}(\mathbb{D})$ is essentially normal
\item (ii)\quad$\sigma_{e}(C_{\varphi})=\{e^{izt}:t\in [0,\infty],
z\in\mathcal{R}_{1}(\eta^{\ast})\}\cup\{0\}$
\end{itemize}
where\quad $\mathcal{R}_{1}(\eta^{\ast})$ is the local essential
range of $\eta^{\ast}\in L^{\infty}(\mathbb{T})$ at $1$ and
$\eta^{\ast}$ is the boundary limit value function of $\eta$.
\end{thmb}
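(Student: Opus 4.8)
The plan is to transfer the problem to the upper half-plane and invoke Theorem A. Conjugating by the isometric isomorphism $\Phi : A^{2}_{\alpha}(\mathbb{D})\to A^{2}_{\alpha}(\mathbb{H})$ of Section 2 gives $\Phi C_{\varphi}\Phi^{-1}=M_{\tau^{2}}C_{\tilde{\varphi}}$, where $\tilde{\varphi}=\mathfrak{C}^{-1}\circ\varphi\circ\mathfrak{C}$ and $\tau(z)=\frac{\tilde{\varphi}(z)+i}{z+i}$. First I would compute $\tilde{\varphi}$ explicitly: writing $w=\mathfrak{C}(z)$ and using $\mathfrak{C}^{-1}(w)=\frac{i(1+w)}{1-w}$, a direct manipulation of $\varphi(w)=\frac{2iw+\eta(w)(1-w)}{2i+\eta(w)(1-w)}$ collapses to
\begin{equation*}
\tilde{\varphi}(z)=z+\eta(\mathfrak{C}(z)),
\end{equation*}
so that $\tilde{\varphi}(z)=z+\psi(z)$ with $\psi=\eta\circ\mathfrak{C}$. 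By the very definition $VMO_{\partial}(\mathbb{H})=\{f\circ\mathfrak{C}:f\in VMO_{\partial}(\mathbb{D})\}$ we get $\psi\in VMO_{\partial}(\mathbb{H})\cap H^{\infty}(\mathbb{H})$, and since $\mathfrak{C}$ maps $\mathbb{H}$ into $\mathbb{D}$ we have $\Im(\psi(z))=\Im(\eta(\mathfrak{C}(z)))>\epsilon>0$. Thus $\psi$ meets all the hypotheses of Theorem A, and since $\Phi$ is unitary, $C_{\varphi}$ is essentially normal (resp. has a given essential spectrum) exactly when $M_{\tau^{2}}C_{\tilde{\varphi}}$ is (resp. does).

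The next step is to locate $M_{\tau^{2}}C_{\tilde{\varphi}}$ inside $\Psi(VMO_{\partial},C([0,\infty]))$. Because $\tau$ is analytic, $M_{\tau^{2}}=T_{\tau^{2}}$; and writing $u(z)=\frac{1}{z+i}\in C(\dot{\overline{\mathbb{H}}})$ one has $\tau=1+u\psi$, so $\tau^{2}=1+2u\psi+u^{2}\psi^{2}$. Using that $C(\dot{\overline{\mathbb{H}}})\subset VMO_{\partial}(\mathbb{H})$ and that the semi-commutators $T_{fg}-T_{f}T_{g}$ are compact whenever one factor lies in $VMO_{\partial}$ (Zhu's theorem, as used in Corollary 5), $[T_{\tau^{2}}]$ is a polynomial in $[T_{u}]$ and $[T_{\psi}]$ modulo $K(A^{2}_{\alpha}(\mathbb{H}))$; since $K(A^{2}_{\alpha}(\mathbb{H}))\subset\mathcal{T}(C(\dot{\overline{\mathbb{H}}}))\subset\mathcal{T}(VMO_{\partial})$, this places $T_{\tau^{2}}\in\mathcal{T}(VMO_{\partial})\subset\Psi$. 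As $C_{\tilde{\varphi}}\in\Psi$ by Proposition 3 and the proof of Theorem A, the product $T_{\tau^{2}}C_{\tilde{\varphi}}$ lies in $\Psi$. Essential normality (i) is then immediate: $\Psi/K(A^{2}_{\alpha}(\mathbb{H}))$ is commutative, so every element commutes with its adjoint modulo compacts.

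For (ii) I would compare Gelfand transforms on $M(\Psi)=(\mathcal{M}\times\{\infty\})\cup(\mathcal{M}_{\infty}\times[0,\infty])$ from Theorem 6. Since $T_{\tau^{2}}\in\mathcal{T}(VMO_{\partial})$, its transform at $(x,t)$ equals $\hat{[T_{\tau^{2}}]}(x)$, independent of $t$, so $\Gamma[T_{\tau^{2}}C_{\tilde{\varphi}}](x,t)=\hat{[T_{\tau^{2}}]}(x)\,\Gamma[C_{\tilde{\varphi}}](x,t)$. On $\mathcal{M}\times\{\infty\}$ the factor $\Gamma[C_{\tilde{\varphi}}]$ already vanishes. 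The decisive point is that on the fiber $\mathcal{M}_{\infty}$ one has $\hat{[T_{\tau^{2}}]}(x)=1$: indeed $\hat{[T_{u}]}(x)=u(\infty)=0$ for $x\in\mathcal{M}_{\infty}$, and multiplicativity of the characters together with $\tau^{2}=1+2u\psi+u^{2}\psi^{2}$ kills the two $u$-divisible terms. Hence $\Gamma[T_{\tau^{2}}C_{\tilde{\varphi}}]$ coincides with $\Gamma[C_{\tilde{\varphi}}]=e^{2\pi i\hat{[T_{\psi}]}(x)t}$ computed in Theorem A, and therefore $\sigma_{e}(C_{\varphi})=\sigma_{e}(C_{\tilde{\varphi}})$. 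Finally, Proposition 7 gives $\{\hat{[T_{\psi}]}(x):x\in\mathcal{M}_{\infty}\}=\mathcal{R}_{\infty}(\psi^{\ast})$, and the boundary values satisfy $\psi^{\ast}=\eta^{\ast}\circ\mathfrak{C}$, so the identity $\mathcal{R}_{1}(\eta^{\ast})=\mathcal{R}_{\infty}(\eta^{\ast}\circ\mathfrak{C})$ recorded above yields $\sigma_{e}(C_{\varphi})=\{e^{izt}:t\in[0,\infty),z\in\mathcal{R}_{1}(\eta^{\ast})\}\cup\{0\}$.

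The main obstacle is the presence of the extra multiplier $T_{\tau^{2}}$, which has no counterpart in Theorem A: one must be sure it perturbs neither the essential normality nor the essential spectrum. Everything hinges on the factorization $\tau^{2}-1=2u\psi+u^{2}\psi^{2}$ with $u\in C(\dot{\overline{\mathbb{H}}})$ vanishing at infinity, so that $T_{\tau^{2}}$ acts as the identity on the fiber $\mathcal{M}_{\infty}$ that governs the essential spectrum; verifying $\hat{[T_{\tau^{2}}]}(x)=1$ there, through the semi-commutator estimates, is the only genuinely new computation beyond Theorem A.
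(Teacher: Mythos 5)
Your proposal is correct and follows essentially the same route as the paper: transfer to the half-plane via $\Phi$, write $\Phi C_{\varphi}\Phi^{-1}=T_{\tau^{2}}C_{\tilde{\varphi}}$ with $\tau=1+u\psi$, $u(z)=\frac{1}{z+i}$, place both factors in $\Psi(VMO_{\partial},C([0,\infty]))$, and observe that $\hat{[T_{u}]}(x)=0$ on $\mathcal{M}_{\infty}$ forces $\hat{[T_{\tau^{2}}]}(x)=1$ there, so the Gelfand transform (hence the essential spectrum) is unchanged from Theorem A. Your explicit semi-commutator justification that $T_{\tau^{2}}\in\mathcal{T}(VMO_{\partial})$ is a slightly fuller account of a step the paper leaves implicit, but the argument is the same.
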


\begin{proof}
Using the isometric isomorphism
$\Phi:A^{2}_{\alpha}(\mathbb{D})\longrightarrow$
$A^{2}_{\alpha}(\mathbb{H})$ introduced in section 2, if
$\varphi:\mathbb{D}\rightarrow$ $\mathbb{D}$ is of the form
\begin{equation*}
\varphi(z)=\frac{2iz+\eta(z)(1-z)}{2i+\eta(z)(1-z)}
\end{equation*}
where $\eta\in$ $H^{\infty}(\mathbb{D})$ satisfies $\Im(\eta(z))>$
$\delta
> 0$ then, by equation (9), for $\tilde{\varphi}=$
$\mathfrak{C}^{-1}\circ\varphi\circ\mathfrak{C}$ we have
$\tilde{\varphi}(z)=$ $z+\eta\circ\mathfrak{C}(z)$ and
\begin{equation}
\Phi\circ C_{\varphi}\circ\Phi^{-1}=
T_{(\frac{z+i+\eta\circ\mathfrak{C}(z)}{z+i})^{2}}C_{\tilde{\varphi}}.
\end{equation}
For $\eta\in$ $VMO_{\partial}\cap H^{\infty}(\mathbb{D})$ we have
both
\begin{equation*}
C_{\tilde{\varphi}}\in\Psi(VMO_{\partial},C([0,\infty]))\quad\textrm{and}\quad
T_{(\frac{z+i+\eta\circ\mathfrak{C}(z)}{z+i})^{2}}\in\Psi(VMO_{\partial},C([0,\infty]))
\end{equation*}
and hence
\begin{equation*}
 \Phi\circ
C_{\varphi}\circ\Phi^{-1}\in\Psi(VMO_{\partial},C([0,\infty])).
\end{equation*}
Since
$\Psi(VMO_{\partial},C([0,\infty]))/K(A^{2}_{\alpha}(\mathbb{H}))$
is commutative and $\Phi$ is an isometric isomorphism, (i) follows
from the argument $C_{\varphi}$ is essentially normal if and only
if $\Phi\circ C_{\varphi}\circ\Phi^{-1}$ is essentially normal and
by equation (14).

For (ii) we look at the values of $\Gamma[\Phi\circ
C_{\varphi}\circ\Phi^{-1}]$ at
$M(\Psi(VMO_{\partial},C([0,\infty]))/K(A^{2}_{\alpha}))$ where
$\Gamma$ is the Gelfand transform of
$\Psi(VMO_{\partial},C([0,\infty]))/K(A^{2}_{\alpha})$. Again
applying the Gelfand transform for
\begin{equation*}
(x,\infty)\in
M(VMO_{\partial},C([0,\infty]))/K(A^{2}_{\alpha}))\subset\mathcal{M}\times
[0,\infty]
\end{equation*}
 we have
\begin{equation*}
(\Gamma[\Phi\circ
C_{\varphi}\circ\Phi^{-1}])(x,\infty)=((\Gamma[T_{(\frac{z+i+\eta\circ\mathfrak{C}(z)}{z+i})^{2}}])(x,\infty))((\Gamma[C_{\tilde{\varphi}}])(x,\infty))
\end{equation*}

Appealing to equation (8) we have
$(\Gamma[C_{\tilde{\varphi}}])(x,\infty)= 0$ for all $x\in$
$\mathcal{M}$ hence we have
\begin{equation*}
(\Gamma[\Phi\circ C_{\varphi}\circ\Phi^{-1}])(x,\infty)=0
\end{equation*}
 for all $x\in$ $\mathcal{M}$. Applying the Gelfand transform for
$$(x,t)\in\mathcal{M}_{\infty}\times [0,\infty]\subset
M(\Psi(VMO_{\partial},C([0,\infty]))/K(A^{2}_{\alpha})$$ we have
\begin{eqnarray*}
& &(\Gamma[\Phi\circ C_{\varphi}\circ\Phi^{-1}])(x,t)=((\Gamma[T_{(\frac{z+i+\eta\circ\mathfrak{C}(z)}{z+i})^{2}}])(x,t))((\Gamma[C_{\tilde{\varphi}}])(x,t))\\
&
&=((1+\Gamma[T_{\eta\circ\mathfrak{C}}](x,t)\Gamma[T_{(\frac{1}{z+i})}](x,t))^{2}((\Gamma[C_{\tilde{\varphi}}])(x,t))
\end{eqnarray*}
Since $x\in$ $\mathcal{M}_{\infty}$ we have
\begin{equation*}
(\Gamma[T_{\frac{1}{i+z}}])(x,t)=\hat{T_{(\frac{1}{i+z})}}(x)=x(T_{\frac{1}{i+z}})=0.
\end{equation*}
Hence we have
\begin{equation*}
(\Gamma[\Phi\circ
C_{\varphi}\circ\Phi^{-1}])(x,t)=(\Gamma[C_{\tilde{\varphi}}])(x,t)
\end{equation*}
for all $(x,t)\in$ $\mathcal{M}_{\infty}\times [0,\infty]$.
Moreover we have
\begin{equation*}
(\Gamma[\Phi\circ
C_{\varphi}\circ\Phi^{-1}])(x,t)=(\Gamma[C_{\tilde{\varphi}}])(x,t)
\end{equation*}
for all $(x,t)\in
M(\Psi(VMO_{\partial},C([0,\infty]))/K(A^{2}_{\alpha}))$.
Therefore by similar arguments in Theorem A (equations (11) and
(12)) we have
\begin{equation*}
\sigma_{e}(\Phi\circ
C_{\varphi}\circ\Phi^{-1})=\sigma_{e}(C_{\tilde{\varphi}}).
\end{equation*}
 By Theorem A (together with equation (14)) we have
\begin{equation*}
\sigma_{e}(C_{\varphi})=\sigma_{e}(C_{\tilde{\varphi}})=\{e^{izt}:z\in\mathcal{R}_{\infty}(\eta\circ\mathfrak{C}^{\ast})=\mathcal{R}_{1}(\eta^{\ast}),t\in
[0,\infty]\}.
\end{equation*}
\end{proof}
\bibliographystyle{amsplain}

\begin{thebibliography}{99}

\bibitem{Arv}\textsc{Arveson W.}, \textit{An Invitation to C*
Algebras},
Graduate Texts in Mathematics 39, Springer-Verlag, 1976.\\

\bibitem{Cow-Mac}\textsc{Cowen C. C., MacCluer  B. D.},
\textit{Composition
Operators on Spaces of Analytic Functions}, CRC Press, 1995.\\

\bibitem{Doug}\textsc{Douglas R. G.}, \textit{Banach Algebra Techniques
in Operator Theory}, Second Edition Graduate Texts in Mathematics,
Vol.179. Springer, 1998.\\

\bibitem{Du-GG-MR}\textsc{Duren P., Gallardo-Gutierrez E., Montes-Rodriguez
A.}, A Paley-Wiener Theorem for Bergman Spaces with application to
Invariant Subspaces, \textit{Bull. London Math. Soc.},\textbf{39}
(2007) 459-466.\\

\bibitem{Du-Sch}\textsc{Duren P., Schuster A.}, \textit{Bergman Spaces}, Mathematical Surveys and Monographs, AMS 2004.\\

\bibitem{Gul}\textsc{G\"{u}l U.}, Essential Spectra of Quasi-parabolic
Composition Operators on Hardy Spaces of Analytic Functions,
\textit{J. Math. Anal. Appl.}, \textbf{377} (2011), pp.771-791.\\

\bibitem{Hof}\textsc{Hoffman K.}, \textit{Banach Spaces of Analytic
Functions}, Prentice-Hall Inc., Englewood Cliffs, N.J., 1962.\\

\bibitem{Ell-W}\textsc{Elliott S., Wynn P.}, Composition operators on
Weighted Bergman spaces of a half-plane, \textit{Proc. Edinburgh
Math. Soc.} \textbf{10} (2009)\\

\bibitem{Pow1}\textsc{Power S.C.}, Commutator ideals and
pseudo-differential C*-Algebras, \textit{Quart. J. Math. Oxford}
(2),\textbf{31} (1980),
467-489.\\

\bibitem{Pow2}\textsc{Power S.C.}, Characters on C*-Algebras, the joint
normal spectrum and a pseudo-differential C*-Algebra,
\textit{Proc.
Edinburgh Math. Soc.} (2),\textbf{24} (1981) no.1, 47-53.\\

\bibitem{Rud}\textsc{Rudin W.}, \textit{Functional Analysis}, McGraw
Hill
Inc., 1973.\\

\bibitem{Vas}\textsc{Vasilevski N.},\textit{Commutative Algebras of
Toeplitz Operators on the Bergman Space} Operator Theory:Advances
and Applications vol. 185, Birkh\"{a}user, 2008.\\

\bibitem{Zh}\textsc{Zhu K.}, VMO, ESV, and Toeplitz Operators on
the Bergman Space, \textit{Trans. A.M.S.}(2)\textbf{302}, 1987.,
617-646.\\

\end{thebibliography}
\addtocontents{toc}{\protect\contentsline {part}{}{}}
\addcontentsline{toc}{chapter}{BIBLIOGRAPHY}

\end{document}